\theoremstyle{plain}
\newtheorem{theorem}{Theorem}[section] 
\newtheorem{lemma}[theorem]{Lemma}
\newtheorem{proposition}[theorem]{Proposition}
\newtheorem{corollary}[theorem]{Corollary}
\theoremstyle{definition}
\newtheorem{definition}[theorem]{Definition}
\newtheorem{remark}[theorem]{Remark}
\theoremstyle{remark}
\mathchardef\emptyset="001F
\numberwithin{equation}{section}
\newcommand{\Om}{\Omega}
\newcommand{\R}{\mathbb{R}}
\newcommand{\Rd}{\mathbb{R}^d}
\newcommand{\Rdd}{\mathbb{R}^{d\times d}}
\newcommand{\Ls}{{L^2}}
\newcommand{\Ht}{H^1(\Om\setminus \Gamma_t;\Rd)}
\newcommand{\V}{\mathbb V}
\newcommand{\C}{\mathbb C}
\newcommand{\A}{\mathbb A}
\newcommand{\dt}{\textnormal{d}t}
\newcommand{\ds}{\textnormal{d}s}
\newcommand{\dtau}{\textnormal{d}\tau}
\newcommand{\dive}{\textnormal{div}}
\newcommand{\e}{\varepsilon}
\newcommand\ee{\end{equation}}
\newcommand\be{\begin{equation}}
\title[Uniqueness and continuous dependence for a viscoelastic problem]
{Uniqueness and continuous dependence\\ for a viscoelastic problem with memory\\ in domains with time dependent cracks}
\author[Federico Cianci]{Federico Cianci}
\address[Federico Cianci]{SISSA, Via Bonomea 265, 34136 Trieste,
Italy}
\email[Federico Cianci]{fcianci@sissa.it}
\author[Gianni Dal Maso]{Gianni Dal Maso}
\address[Gianni Dal Maso]{SISSA, Via Bonomea 265, 34136 Trieste,
Italy}
\email[Gianni Dal Maso]{dalmaso@sissa.it}
\begin{document}

\begin{abstract}
We study some hyperbolic partial integro-differential systems in domains with time dependent cracks. In particular, we give conditions on the cracks which imply the uniqueness of the solution with prescribed initial-boundary conditions, and its continuous dependence on the cracks.
\end{abstract}

\maketitle

{\small{{\it Keywords:\/} evolution problems with memory, elastodynamics, viscoelasticity. }}

{\small{{\it 2020 MSC:\/}  
35Q74
, 74D05
, 74H20
.

}}

\section{Introduction}\label{intro}

The study of models of viscoelastic materials with memory has a long history that goes back to Boltzmann (\cite{Boltz_1} and \cite{Boltz_2}) and Volterra (\cite{Volterra_1} and \cite{Volterra_2}). Recent results on this subject can be found in \cite{DL_V1}, \cite{Fab-Gi-Pata}, \cite{Fab-Morro}, and \cite{Slepyan}. For particular values of the parameters, the Maxwell model for viscoelastic materials is governed by the following system of partial differential equations in $Q:=\Omega{\times}[0,T]$ with a memory term:
\begin{equation}\label{1.2}
    \ddot{u}(t) - \dive\big{(}(\C+\V) Eu(t)\big{)} + \dive\Big{(} \int^t_{0} \textnormal{e}^{\tau-t}\,\V Eu(\tau) \, \textnormal{d}\tau \Big{)} = \ell(t),
    \end{equation}
 where $\Omega \subset \Rd$ is the reference configuration, $[0, T]$ is the time interval, $u(t)$, $Eu(t)$, and $\ddot{u}(t)$ are the displacement at time $t$, the symmetric part of its gradient, and its second derivative with respect to time, $\C$ and $\V$ are the elasticity and viscosity tensors, and $\ell(t)$ is the external load at time $t$.
 
     In this paper we study problem \eqref{1.2} with a prescribed time dependent growing crack $\Gamma_t$, $t\in[0,T]$, namely
     \begin{equation}\label{intro-mainprobl}
    \ddot{u}(t) - \dive\big{(}(\C+\V) Eu(t)\big{)} + \dive\Big{(} \int^t_{0} \textnormal{e}^{\tau-t}\,\V Eu(\tau) \, \textnormal{d}\tau \Big{)} = \ell(t)  \,\,\, \text{ in } Q_{cr},
    \end{equation}
    where $Q_{cr}:=\{(x,t):t\in[0,T],\, x\in\Omega\setminus\Gamma_t\}$. Problem \eqref{intro-mainprobl} is complemented by initial conditions at $t=0$ for $u$ and $\dot u$ and by boundary conditions on $\partial\Omega$ and $\Gamma_t$.
    
    The existence of a solution of \eqref{intro-mainprobl} is proved in \cite{Sapio}. Our first result (Theorem \ref{thm:risultato-ex-uniq}) is the uniqueness of the solution under strong regularity assumptions on the sets $\Gamma_t$ and on their dependence on $t$. More precisely, we 
     assume the same regularity conditions that were used in \cite{DalMaso-Luc} and \cite{Caponi} to prove the uniqueness of the solution in $Q_{cr}$ of the problem without the memory term, i.e.,
     \begin{equation}\label{intro-onde}
    \ddot{u}(t) - \dive\big{(}(\C+\V) Eu(t)\big{)}  = \ell(t) \,\,\, \text{ in } Q_{cr}.
    \end{equation}
     
     To prove our uniqueness result we write problem \eqref{intro-mainprobl} in the equivalent form
     \begin{equation}\label{1.2-onde}
    \ddot{u}(t) - \dive\big{(}(\C+\V) Eu(t)\big{)}  = \ell(t) - \dive F_{u}(t) \,\,\, \text{ in } Q_{cr},
    \end{equation}
    where
    \begin{equation*}
        F_{u}(t):= \int^t_{0} \textnormal{e}^{\tau-t}\,\V Eu(\tau) \, \textnormal{d}\tau.
    \end{equation*}
    This allows us to estimate $u$ in terms of $F_u$ using the energy inequality for the solution of \eqref{intro-onde}. Then we estimate $F_u$ in terms of $u$ using just the definition of $F_u$. Uniqueness is obtained from the combined estimate.
    
    Our second result (Theorem \ref{thm:dip-cont-main}) is the continuous dependence of the solutions of \eqref{intro-mainprobl} on the cracks. More precisely, we consider a sequence $\Gamma^n_t$ of time dependent cracks and the solutions $u^n$ of problem \eqref{intro-mainprobl} with $\Gamma_t$ replaced by $\Gamma^n_t$. Under suitable assumption on the convergence of $\Gamma^n_t$ to $\Gamma_t$ we prove that the sequence $u^n$ converges to the solution $u$ of \eqref{intro-mainprobl}. Our assumptions of $\Gamma^n_t$ are similar to those considered in \cite{DalMaso-Luc} and \cite{Caponi} to prove the corresponding result for \eqref{intro-onde}.
    
    To prove the continuous dependence we write our problem in the form \eqref{1.2-onde} and we regard $u^n$ as a fixed point for a suitable operator depending on $n$, which is a contraction if $T$ is small enough. Under this assumption the convergence of $u^n$ is a consequence of a general results on fixed points of contractions (Lemma \ref{lemma:converg-punti-fissi}). To show that its hypotheses are satisfied, we use the continuous dependence on the cracks of the solutions of problem \eqref{intro-onde} (see \cite{DalMaso-Luc} and \cite{Caponi}) and we obtain the result if $T$ is small enough. If $T$ is large we divide the interval $[0,T]$ into smaller intervals where we can apply the previous result.

\section{Formulation of the problem}\label{Formulation}

The reference configuration of our problem
is a bounded open set $\Om\subset\Rd$, $d\geq 1$, with Lipschitz boundary $\partial\Om$. We assume that $\partial\Om=\partial_D\Om\cup \partial_N\Om$, where  $\partial_D\Om$ and $\partial_N\Om$ are disjoint (possibly empty) Borel sets, on which we prescribe
Dirichlet and Neumann boundary conditions respectively. 

 For every $x\in\overline\Omega$ the elasticity tensor $\C(x)$ and the viscosity tensor $\V(x)$ are prescribed elements of the space $\mathcal{L}(\R^{d\times d}_{sym}; \R^{d\times d}_{sym})$ of linear maps from $\R^{d\times d}_{sym}$ into $\R^{d\times d}_{sym}$, where $\Rdd_{sym}$ is the space of reald $d\times d$ symmetric matrices. The euclidean scalar product between the matrices $A$ and $B$ is denoted by $A:B$. We assume that the functions $\C,\, \V \colon \overline{\Om} \to \mathcal{L}(\R^{d\times d}_{sym}; \R^{d\times d}_{sym})$ satisfy the following properties, for suitable constants $\alpha_0>0$ and $M_0>0$:
\begin{itemize}
    \item[(H1)] (regularity) $\C$ is of class $C^1$ and $\max_{x \in \overline\Omega} |\C(x)|\leq M_0$;\smallskip
    \item[(H2)] (symmetry) $ \C(x)A : B= A: \C(x) B$ for every $x\in\overline\Om$ and $A,\,B\in \R^{d\times d}_{sym}$;\smallskip
    \item[(H3)] (coerciveness) $ \C(x)A : A\geq \alpha_0 |A|^2$ for every $x\in\overline\Om$ and $A\in \R^{d\times d}_{sym}$;\smallskip
    \item[(H4)] (regularity) $\V$ is of class $C^1$ and $\max_{x \in \overline\Omega} |\V(x)|\leq M_0$;\smallskip
    \item[(H5)] (symmetry) $\V(x)A : B=A :\V(x)  B$ for every $x\in\Om$ and $A,\,B\in \R^{d\times d}_{sym}$;\smallskip
    \item[(H6)] (coerciveness) $ \V(x)A : A\geq \alpha_0 |A|^2$ for every $x\in\overline\Om$ and $A\in \R^{d\times d}_{sym}$.
\end{itemize}
 
 Throughout the paper we study the problem in the time interval $[0,T]$, with $T>0$. For $t\in[0,T]$ the crack at time $t$ is given by a subset $\Gamma_t$ of the intersection between $\overline\Omega$ and a suitable $d-1$ dimensional manifold $\Gamma$ (regarded as the crack path). We assume that
\begin{itemize}
\item[(H7)] $\Gamma$ is a complete $(d-1)$-dimensional $C^2$ manifold with boundary;\smallskip
\item[(H8)] $\Omega \cap \partial \Gamma = \emptyset $ and $ \mathcal{H}^{d-1}( \Gamma \cap \partial\Om{} )=0$, where $\mathcal{H}^{d-1}$ denotes the $(d-1)$-dimensianal Hausdorff measure;\smallskip
\item[(H9)] for every $x\in \Gamma\cap \partial\Omega$ there exists an open neighborhood $U_x$ of $x$ in $ \Rd$
such that $U_x \cap (\Omega\setminus \Gamma)$
is the union of two non empty disjoint open sets $U^+_x$ and $U^-_x$ with Lipschitz boundary;\smallskip
\item[(H10)] $\Gamma_t$ is closed, $\Gamma_t\subset \Gamma \cap \overline{\Omega}$ for every $t\in [0,T]$, and $\Gamma_s \subset \Gamma_t$ for every $s< t$ (irreversibility of the fracture process).
\end{itemize}

Moreover we assume that there exist $\Phi, \Psi: [0, T] \times \overline{\Om} \to \overline{\Om}$ with the following properties:
\begin{itemize}
\item[(H11)] $\Phi, \Psi$ are of class $C^{2,1}$;\smallskip
\item[(H12)]  $\Psi(t, \Phi(t, y)) = y$ and $\Phi(t, \Psi(t, x)) = x$ for every $x, y \in \overline{\Om}$;\smallskip
\item[(H13)] $\Phi(t, \Gamma) = \Gamma,\, \Phi(t, \Gamma_0) = \Gamma_t,$ and $ \Phi(t, y) = y$ for every $t \in [0, T]$ and every $y$ in a
neighborhood of $\partial \Omega$;\smallskip
\item[(H14)] $\Phi(0,y)=y$ for every $y\in \overline\Omega$;\smallskip
\item[(H15)] $|\dot\Phi(t,y)|^2<\frac{m_{det}(\Psi)\alpha_0}{M_{det}(\Psi)K}$ for every $y\in \overline\Omega$, where the dot denotes the derivative with respect to $t$,   $m_{det}(\Psi):=\min \det D\Psi$, $M_{det}(\Psi):=\max \det D\Psi$. and $K$ is the constant in Korn’s inequality in Lemma \ref{rem:Korn_ineq} below.
\end{itemize}

We shall prove that our hypotheses imply that Korn’s inequality holds on $\Omega\setminus \Gamma$. We begin with the following technical lemma.

\begin{lemma}\label{lemma:aperti-lip}
Under hypotheses {\rm(H7)-(H9)}, the set $\Omega\setminus \Gamma$ is the union of a finite number of connected open sets with Lipschitz boundary.
\end{lemma}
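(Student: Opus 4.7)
The plan is to cover $\overline{\Omega}$ by finitely many open balls on each of which $\Omega\setminus\Gamma$ has a simple local structure, and then glue the local pictures together. Since by (H7) $\Gamma$ is a complete embedded $C^2$ manifold with boundary in $\Rd$, it is closed in $\Rd$, so $\Omega\setminus\Gamma$ is open.

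For each $x\in\overline{\Omega}$ I construct a ball $B_x$ as follows. If $x\notin\Gamma$, a sufficiently small $B_x$ is disjoint from $\Gamma$; when $x\in\partial\Omega$ the Lipschitz regularity of $\partial\Omega$ makes $B_x\cap\Omega$ a Lipschitz domain. If $x\in\Gamma\cap\Omega$, then (H8) gives $x\notin\partial\Gamma$, so $\Gamma$ is locally a $C^2$ hypersurface without boundary, and for $B_x$ small $B_x\cap(\Omega\setminus\Gamma)$ splits into two connected open sets with $C^2$ boundary. If $x\in\Gamma\cap\partial\Omega$, I take $B_x=U_x$ from (H9), on which $B_x\cap(\Omega\setminus\Gamma)=U_x^+\cup U_x^-$ is a disjoint union of two open Lipschitz sets.

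By compactness of $\overline{\Omega}$ I extract a finite subcover $B_{x_1},\dots,B_{x_N}$, which yields at most $2N$ connected open ``local pieces'' of $\Omega\setminus\Gamma$. Every connected component of $\Omega\setminus\Gamma$ is open and is a union of such pieces (each local piece being connected, it must lie entirely in a single global component), so the number of components is at most $2N$. To check the Lipschitz regularity of a component $U$, I pick $y\in\partial U$ and a ball $B_{x_i}\ni y$: the intersection $U\cap B_{x_i}$ is either one of the local pieces at $x_i$ or the union of the two, and in each case its boundary near $y$ is a finite union of Lipschitz graphs coming from $\Gamma$ and from $\partial\Omega$.

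The only genuinely delicate step is the local analysis at junction points $x\in\Gamma\cap\partial\Omega$, and hypothesis (H9) is precisely designed to supply it; the rest of the argument is a routine compactness-and-gluing exercise.
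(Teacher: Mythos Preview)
Your argument has a genuine gap in the last step, where you claim that each connected component $U$ of $\Omega\setminus\Gamma$ has Lipschitz boundary. This is false in general under (H7)--(H9). Take $d=2$, $\Omega=\{1<|x|<2\}$ an annulus, and $\Gamma=[\tfrac12,\tfrac52]\times\{0\}$. All hypotheses are met: $\partial\Gamma=\{(\tfrac12,0),(\tfrac52,0)\}$ lies outside $\Omega$, $\mathcal H^{1}(\Gamma\cap\partial\Omega)=0$, and (H9) is easy to verify at the two points of $\Gamma\cap\partial\Omega$. Here $\Omega\setminus\Gamma$ is \emph{connected} (one can travel around the annulus avoiding the slit), but at any interior slit point such as $(\tfrac32,0)$ the set $\Omega\setminus\Gamma$ fills \emph{both} sides of $\Gamma$, so it cannot be written locally as the subgraph of any Lipschitz function. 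In your language, the case ``$U\cap B_{x_i}$ is the union of the two local pieces'' really does occur, and in that case $U\cap B_{x_i}$ is a ball with a slit removed, which is not a Lipschitz domain even though its topological boundary is a smooth curve. The boundary being locally a Lipschitz graph is not the same as the set being a Lipschitz domain: the domain must lie on \emph{one} side of the graph.

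The paper avoids this issue by not attempting to prove regularity of the connected components. It simply exhibits a finite family of connected open sets with Lipschitz boundary whose (overlapping) union is $\Omega\setminus\Gamma$: the local half-pieces $U_{x_i}^{\pm}$ covering $\Gamma\cap\overline\Omega$, analogous pieces $V_{y_j}\cap(\Omega\setminus\Gamma)$ covering the rest of $\partial\Omega$, and one interior open set $W$ with Lipschitz boundary containing the remaining compact part of $\overline\Omega$. Each member of this cover is Lipschitz by construction, and no gluing into components is needed. Your local analysis is fine and is essentially the first half of that argument; the fix is to stop there and take the local pieces themselves (plus an interior $W$) as the finite family required by the lemma, rather than passing to global components.
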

\begin{proof}
Since $\Gamma$ is a $C^2$ manifold of dimension $d-1$, for every $x\in \Gamma \cap \Omega $ there exists an open neighborhood $U_x$ of $x$ in $ \Rd$
such that $U_x \cap (\Omega\setminus \Gamma)$
is the union of two non empty disjoint open sets $U^+_x$ and $U^-_x$ with Lipschitz boundary. By our hypothesis on $\Gamma \cap \partial \Omega$ the same property holds, more in general, for every $x\in\Gamma \cap \overline{\Omega}$. Since $\Gamma \cap \overline{\Omega}$ is compact, there exists a finite number of points $x_1,...,x_m\in \Gamma \cap \overline{\Omega}$ such that
$\Gamma \cap \overline{\Omega} \subset \cup^m_{i=1} U_{x_i}$.

Since $\Omega$ has Lipschitz boundary, for every $y\in\partial\Omega\setminus\cup^m_{i=1}U_{x_i}\subset \partial\Omega\setminus\Gamma$ there exists an open neighborhood $V_y$ of $y$ in $\Rd$ such that $V_y\cap (\Omega\setminus\Gamma)$ has Lipschitz boundary. By compactness there exists a finite number of points $y_1,...,y_n\in \partial\Omega\setminus\cup^m_{i=1}U_{x_i} $ such that $\partial\Omega\setminus\cup^m_{i=1}U_{x_i}\subset \cup^n_{j=1}V_{y_j}$.

Since $\overline{\Omega}\setminus(\cup^m_{i=1}U_{x_i}\cup \cup^n_{j=1}V_{y_j})$ is compact and is contained in the open set $\Omega\setminus\Gamma$, there exists an open set $W$ with Lipschitz boundary such that $\overline{\Omega}\setminus(\cup^m_{i=1}U_{x_i}\cup \cup^n_{j=1}V_{y_j})\subset W \subset \Omega \setminus \Gamma $.
Therefore
\begin{equation*}
    \Omega \setminus \Gamma = W \cup \bigcup^m_{i=1}U^+_{x_i} \cup \bigcup^m_{i=1}U^-_{x_i} \cup \bigcup^n_{j=1}(V_{y_j}\cap(\Omega\setminus\Gamma)).
\end{equation*}
Since every open sets with Lipschitz boundary is the union of a finite number of connected open sets with Lipschitz boundary, the conclusion follows.
\end{proof}

For every $u\in H^1(\Omega\setminus\Gamma;\Rd)$ $Du$ denotes jacobian matrix in the sense of distributions on $\Omega\setminus\Gamma$ and $Eu$ is its symmetric part, i.e.,
\begin{equation*}
    Eu:=\tfrac{1}{2}(Du + Du^T).
\end{equation*}

\begin{lemma}\label{rem:Korn_ineq}
 Under hypotheses {\rm(H7)-(H9)}, there exists a constant $K$, depending only on $\Om$ and $\Gamma$, such that
\begin{equation}\label{eq:equiv-norm}
\|Du\|^2 \leq K ( \|u\|^2 + \|Eu\|^2 ) 
\end{equation}
for every $u\in H^1(\Omega\setminus\Gamma;\Rd)$, where $\|\cdot\|$ denotes the $L^2$ norm.
\end{lemma}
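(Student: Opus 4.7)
The plan is to reduce the claim to the classical second Korn inequality on each piece of a Lipschitz decomposition of $\Omega\setminus\Gamma$, which is exactly what Lemma~\ref{lemma:aperti-lip} provides.

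First I would invoke Lemma~\ref{lemma:aperti-lip} to write
\[
\Omega\setminus\Gamma=\bigcup_{i=1}^N \Omega_i,
\]
where $\Omega_1,\dots,\Omega_N$ are pairwise disjoint connected open sets with Lipschitz boundary. Then, for any $u\in H^1(\Omega\setminus\Gamma;\mathbb{R}^d)$, the restriction $u_i:=u|_{\Omega_i}$ belongs to $H^1(\Omega_i;\mathbb{R}^d)$, and $Du_i$ (respectively $Eu_i$) coincides with the restriction to $\Omega_i$ of $Du$ (respectively $Eu$) computed in the sense of distributions on $\Omega\setminus\Gamma$. In particular
\[
\|Du\|^2=\sum_{i=1}^N\|Du_i\|_{L^2(\Omega_i)}^2,\qquad \|u\|^2=\sum_{i=1}^N\|u_i\|_{L^2(\Omega_i)}^2,\qquad \|Eu\|^2=\sum_{i=1}^N\|Eu_i\|_{L^2(\Omega_i)}^2.
\]

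Next I would apply to each $\Omega_i$ the classical second Korn inequality on bounded Lipschitz domains: there exists a constant $K_i$, depending only on $\Omega_i$, such that
\[
\|Dv\|_{L^2(\Omega_i)}^2\le K_i\bigl(\|v\|_{L^2(\Omega_i)}^2+\|Ev\|_{L^2(\Omega_i)}^2\bigr)
\]
for every $v\in H^1(\Omega_i;\mathbb{R}^d)$. Taking $v=u_i$ and summing over $i=1,\dots,N$ yields \eqref{eq:equiv-norm} with $K:=\max_i K_i$, a constant depending only on $\Omega$ and $\Gamma$ (through the decomposition provided by Lemma~\ref{lemma:aperti-lip}).

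There is no real obstacle here, since the geometric difficulty — producing a Lipschitz decomposition that accommodates the tips of $\Gamma$ on $\partial\Omega$ and the boundary $\partial\Gamma$ inside $\mathbb{R}^d\setminus\overline\Omega$ — has already been handled by (H7)--(H9) and Lemma~\ref{lemma:aperti-lip}. The only point that merits a line of justification is that distributional derivatives commute with restriction to the open subsets $\Omega_i$, so the left-hand side $\|Du\|^2$ really does split as a sum of the pieces $\|Du_i\|_{L^2(\Omega_i)}^2$ appearing in the classical Korn inequality.
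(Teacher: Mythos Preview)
Your proposal is correct and follows exactly the paper's approach: apply the second Korn inequality on each Lipschitz piece provided by Lemma~\ref{lemma:aperti-lip} and combine. One small caveat: Lemma~\ref{lemma:aperti-lip} does not assert that the $\Omega_i$ are pairwise disjoint (its proof yields an overlapping cover), but this is harmless --- one still has $\|Du\|^2\le\sum_i\|Du_i\|_{L^2(\Omega_i)}^2$ and $\sum_i(\|u_i\|_{L^2(\Omega_i)}^2+\|Eu_i\|_{L^2(\Omega_i)}^2)\le N(\|u\|^2+\|Eu\|^2)$, so \eqref{eq:equiv-norm} holds with $K=N\max_iK_i$.
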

\begin{proof}
The result is a consequence of the second Korn’s inequality (see, e.g., \cite[Theorem 2.4]{Ol-Sha-Yos}), applied to the sets with Lipschitz boundary provided by Lemma \ref{lemma:aperti-lip}.
\end{proof}

\begin{remark}
 Under hypotheses {\rm(H7)-(H9)}, using a localization argument (see the proof of Lemma \ref{lemma:aperti-lip}) we can prove that the trace operator is well defined and continuous from $H^1(\Omega\setminus\Gamma;\Rd)$ into $L^2(\partial\Omega;\Rd)$.
\end{remark}

We now introduce the function spaces that will be used in the precise formulation of problem \eqref{intro-mainprobl}. We set
\begin{equation}\label{eq:VeH}
    V:=H^1(\Omega\setminus\Gamma;\Rd)\quad\textnormal{and}\quad H:=L^2(\Omega;\Rd{}).
\end{equation}
For every finite dimensional Hilbert space $Y$ the symbols $(\cdot\,,\cdot)$ and $\|\cdot\|$ denote the scalar product and the norm in the $L^2(\Omega; Y)$, according to the context.
The space $V$ is endowed with the norm
\begin{equation}
    \|u\|_V:=\big{(}\|u\|^2+ \|Du\|^2\big{)}^{{1}/{2}}.
\end{equation}
For every $t\in [0,T]$ we define
\begin{equation}\label{eq:def-spazi}
V_t:=\Ht \quad \textnormal{and} \quad V^D_t:=\big\{u\in V_t\,\,\big|\,\,u|_{\partial_D\Om}=0\big\},
\end{equation}
where $u|_{\partial_D\Om}$ denotes the trace of $u$ on $\partial_D\Omega$. We note that $V_t$ and $V^D_t$ are closed linear subspaces of $V$.

We define
\begin{equation}\label{eq:Vcorsivo}
    \mathcal{V}:=\big\{ v\in \Ls(0,T; V)\cap H^1(0,T;H)\,\big|\,v(t)\in V_t\,\text{  for a.e. }\,t\in (0,T) \big\},
\end{equation}
which is a Hilbert space with the norm
\begin{equation}\label{eq:norma_U}
    \|v\|_{\mathcal{V}}:=\big{(}\|v\|^2_{\Ls(0,T;V)}+\|\dot v\|^2_{\Ls(0,T;H)}\big{)}^{\frac{1}{2}},
\end{equation}
where the dot denotes the distibutional derivative with respect to $t$. Moreover we set
\begin{equation}\label{eq:VcorsivoD}
     \mathcal{V}^D:=\big\{ v\in \mathcal{V} \,\big|\,v(t)\in V^D_t\,\text{  for a.e. }\,t\in (0,T) \big\}
\end{equation}
and note that it is a closed linear subspace of $\mathcal{V}$. Since $H^1(0,T;H)\hookrightarrow C^0([0,T];H)$ we have $\mathcal{V}\hookrightarrow C^0([0,T],H)$. In particular $v(0)$ and $v(T)$ are well defined as elements of $H$, for every $v\in \mathcal{V}$.

We set
\begin{equation}
    \tilde{H}:=L^2(\Omega;\R^{d\times d}_{sym}).
\end{equation}
On the forcing term $\ell(t)$ of \eqref{intro-mainprobl} we assume that
\begin{equation}
    \ell(t):= f(t) - \dive F(t),
\end{equation}
where
\begin{equation}\label{eq:forzante-f}
    f\in \Ls(0,T;H) \quad \text{and} \quad
    F\in H^1(0,T;\tilde{H})
\end{equation}
are prescribed function. As usual the divergence of a matrix valued function is the vector valued function whose components are obtained taking the divergence of the rows.

As for the Dirichlet boundary condition on $\partial_D \Omega$, it is obtained by prescribing a function
\begin{equation}\label{eq:dato-u_D}
    u_D\in H^2(0,T;\,H)\cap H^1(0,T;\,V_0).
\end{equation}
We impose that for a.e. $t\in[0,T]$ the trace of the solution $u(t)$ is equal to the trace $u_D(t)$ on $\partial_D\Om$, i.e., $u(t)-u_D(t) \in {V}^D_t$.

About the initial data we fix
\begin{equation}\label{eq:dati-in-u0u1}
    u^0 \in V_0 \quad \text{and}\quad u^1\in H.
\end{equation}
Moreover, we assume the compatibility condition
\begin{equation}\label{eq:comp-conditt}
    u^0-u_D(0)\in V^D_0.
\end{equation}

We are now in a position to give the precise definition of solution of problem \eqref{intro-mainprobl}. 

\begin{definition}[Solution for visco-elastodynamics with cracks]\label{def:weak-sol}
We say that $u$ is a weak solution of problem \eqref{intro-mainprobl} of visco-elastodynamics on the cracked domains $\Omega\setminus\Gamma_t$, with external load $\ell=f-\dive F$,  Dirichlet boundary condition $u_D$ on $\partial_D\Omega$, natural Neumann boundary condition on $\partial_N\Omega\cup\Gamma_t$, and initial conditions $u^0$ and $u^1$, if
\begin{align}
    &\label{eq:weak_generalizzata1} \,\,\,u\in\mathcal{V} \quad \text{and} \quad u-u_D\in \mathcal{V}^D,\\
    &-\int^T_0(\dot u(t),\dot \varphi(t))\,\dt+ \int^T_0 ((\C+ \V) Eu(t),E\varphi(t)) \,\dt - \int^T_0\int^t_0  \textnormal{e}^{\tau-t} (\V Eu(\tau), E\varphi(t))\,\dtau\dt\vspace{1cm}\nonumber \\
    &\label{eq:weak_generalizzata2}= \int^T_0 (f(t),\varphi(t))\,\dt{} + \int^T_0  (F(t),E\varphi(t))  \,\dt{}  \quad\text{for all }  \varphi\in \mathcal{V}^D  \text{ with } \varphi(0)=\varphi(T)=0,\\
    & \label{eq:weak_generalizzata3}\,\,\,u(0)=u^0 \quad \text{in }  H \quad  \text{and} \quad \dot u(0)=u^1 \quad \text{in } (V^D_0)^*,
\end{align}
where $(V^D_0)^*$ denotes the topological dual of $V^D_t$ for $t=0$.
\end{definition}
\begin{remark}\label{remark 3.5}
 If $u$ satisfy \eqref{eq:weak_generalizzata1} and \eqref{eq:weak_generalizzata2}, it is possible to prove that $\dot u\in H^1(0,T;(V^D_0)^*)$ (see \cite[Remark 4.6]{Sapio}), which implies $\dot u\in C^0([0,T];(V^D_0)^*)$. In particular $\dot u(0)$ is well defined as an element of $(V^D_0)^*$.
\end{remark}

\begin{remark}
 Under suitable regularity assumptions, $u$ is a solution in the sense of Definition \ref{def:weak-sol} if and only if $u(0)=u^0$, $\dot u(0)=u^1$, and for every $t\in[0,T]$
\begin{alignat*}{2}
    & \ddot{u}(t) - \dive\big{(}(\C + \V) Eu(t)\big{)} + \dive\Big{(} \int^t_0 \textnormal{e}^{\tau-t}\,\V Eu(\tau) \, \textnormal{d}\tau \Big{)} = f(t)-\dive F(t) \qquad &&\textnormal{in } \Om\setminus\Gamma_t,\\
    &u(t) = u_D(t) \qquad && \text{on $\partial_D\Om$,} \\
    &\Big{(}(\C + \V) Eu(t)  -  \int^t_0 \textnormal{e}^{\tau-t}\,\V Eu(\tau) \, \textnormal{d}\tau \Big{)}\nu=F(t)\nu\qquad && \text{on $\partial_N\Om$,}\\
    &\Big{(}(\C + \V) Eu(t)  -  \int^t_0 \textnormal{e}^{\tau-t}\,\V Eu(\tau) \, \textnormal{d}\tau \Big{)}^{\!\pm}\!\!\!\nu=F(t)^\pm\nu\qquad && \text{on $\Gamma_t$,}\label{eq:bcNCrack}
    \end{alignat*}
    where $\nu$ is the unit normal and the symbol $\pm$ denotes suitable limits on each side of $\Gamma_t$.
    
    The last two conditions represent the natural Neumann boundary conditions on $\partial_N\Omega$ and on the faces of $\Gamma_t$.
\end{remark}

To describe the boundedness properties of the solutions of problem \eqref{eq:weak_generalizzata1}-\eqref{eq:weak_generalizzata3}, we introduce the space
\begin{equation}\label{eq:defVinfty}
    \mathcal{V}^\infty:=\big\{ v\in L^\infty(0,T; V)\cap W^{1,\infty}(0,T;H)\,\big|\,v(t)\in V_t\,\text{  for a.e. }\,t\in (0,T) \big\},
\end{equation}
which is a Banach space with the norm
\begin{equation}\label{eq:defnormaVinfty}
    \|v\|_{\mathcal{V^\infty}}:=\|v\|_{L^\infty(0,T;V)}+\|\dot v\|_{L^\infty(0,T;H)}.
\end{equation}
As for the continuity properties, it is convenient to introduce the space of weakly continuous functions with values in a Banach space $X$ with topological dual $X^*$, defined by
\begin{equation*}
    C^0_w([0,T]; X):=\big\{ v: [0,T] \to X \,\big|\, t \mapsto \langle h,\, v(t) \rangle \textnormal{ is continuous for every } h \in X^* \big\}.
\end{equation*}
We are now in position to state one of the main results of the paper.
\begin{theorem}\label{thm:risultato-ex-uniq}
Assume {\rm(H1)-(H15)} and \eqref{eq:forzante-f}-\eqref{eq:comp-conditt}. Then there exists a unique solution of problem \eqref{eq:weak_generalizzata1}-\eqref{eq:weak_generalizzata3}. Moreover $u \in \mathcal{V}^\infty$, $u\in C^0_w([0,T];V)$, and $\dot u\in C^0_w([0,T];H)$.
\end{theorem}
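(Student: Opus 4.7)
The plan is to separate existence, which is already in the literature, from uniqueness, which is the novel content. For existence together with the regularity $u\in\mathcal{V}^\infty$, $u\in C^0_w([0,T];V)$, and $\dot u\in C^0_w([0,T];H)$, I would simply invoke the construction of \cite{Sapio}, where these properties are proved under exactly the hypotheses (H1)--(H15) and the data assumptions \eqref{eq:forzante-f}--\eqref{eq:comp-conditt}. So the real work is uniqueness.

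For uniqueness I would follow the strategy announced in the introduction. Let $u_1,u_2\in\mathcal{V}$ be two weak solutions with the same data and set $w:=u_1-u_2\in\mathcal{V}^D$. Subtracting the two identities \eqref{eq:weak_generalizzata2} and using linearity, $w$ turns out to be a weak solution of the no-memory problem
\begin{equation*}
\ddot w(t)-\dive\bigl((\C+\V)Ew(t)\bigr)=-\dive F_w(t),\qquad F_w(t):=\int_0^t \textnormal{e}^{\tau-t}\,\V Ew(\tau)\,\dtau,
\end{equation*}
with homogeneous Dirichlet, Neumann, and initial data. Since $w\in\mathcal{V}$, the identity $\dot F_w(t)=\V Ew(t)-F_w(t)$ shows that $F_w\in H^1(0,T;\tilde H)$, so this forcing has exactly the regularity required in order to apply the uniqueness/energy estimate of \cite{DalMaso-Luc,Caponi} for problem \eqref{intro-onde}. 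It is precisely here that hypotheses (H7)--(H15), and in particular the subcritical crack-speed bound (H15), come into play.

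Applying that energy inequality to $w$ yields, for every $t\in[0,T]$,
\begin{equation*}
\|\dot w(t)\|^2+\|Ew(t)\|^2 \leq C_1\int_0^t \|F_w(s)\|^2\,\ds,
\end{equation*}
while the defining integral for $F_w$ together with Jensen's inequality gives, for every $s\in[0,T]$,
\begin{equation*}
\|F_w(s)\|^2 \leq C_2 \int_0^s \|Ew(\tau)\|^2\,\dtau.
\end{equation*}
Substituting the second estimate into the first and applying Gronwall's lemma forces $Ew\equiv 0$ and $\dot w\equiv 0$, which combined with $w(0)=0$ and Korn's inequality (Lemma \ref{rem:Korn_ineq}) yields $w\equiv 0$, i.e. $u_1=u_2$.

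The step I expect to be the main obstacle is the precise translation of the weak formulation of the full memory problem into that of the no-memory problem satisfied by $w$: one needs to verify that the subtracted identity really matches, test-function space by test-function space and initial-trace by initial-trace, the weak formulation used in \cite{DalMaso-Luc,Caponi}, and that the energy estimate there is available in a pointwise-in-$t$ form strong enough to bound $\|\dot w(t)\|^2+\|Ew(t)\|^2$ by the $L^2$-norm in time of the forcing. Once this bookkeeping and the verification that $F_w$ lies in the right functional class are carried out, the combined Gronwall argument requires no additional idea.
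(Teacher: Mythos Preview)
Your reduction is exactly the paper's: write the difference $w=u_1-u_2$ as a solution of the pure elastodynamic system \eqref{eq:onde1}--\eqref{eq:onde3} with homogeneous data and forcing $F=F_w=\mathcal{L}w$, then combine the energy inequality for that system with the trivial bound on $F_w$ in terms of $Ew$. The only divergence is in how the loop is closed. The paper does \emph{not} use Gronwall: it derives the $L^\infty$-in-time estimate of Corollary~\ref{cor:stima-ris-datinulli}, namely $\|w\|_{\mathcal{V}^\infty}\le B(T+T^3)\|\V\|_\infty\|w\|_{\mathcal{V}^\infty}$, which forces $w=0$ only when $T$ is small; it then runs a contradiction on $t_0:=\inf\{t:w(t)\neq0\}$, restarting the problem on a short interval $[t_0,t_0+\delta]$. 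Your pointwise-in-$t$ Gronwall argument is a legitimate alternative and arguably more direct, since it avoids the restart.

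One caution: the energy inequality \eqref{eq:energy-bal-wave} does not give the clean bound $\|\dot w(t)\|^2+\|Ew(t)\|^2\le C_1\int_0^t\|F_w\|^2$ that you wrote. The work term $\mathcal{W}_w(t)$ contains $-\int_0^t(\dot F_w,Ew)\,\ds+(F_w(t),Ew(t))$, and since $\dot F_w=\V Ew-F_w$ this produces an additional $\int_0^t\|Ew(s)\|^2\,\ds$ on the right-hand side (the sign of the $\V$-term happens to help, but the $F_w$ part remains). After absorbing $(F_w(t),Ew(t))$ by Young and bounding $\|F_w(t)\|^2\le C\int_0^t\|Ew\|^2$, you land on an inequality of Gronwall type in $\|Ew(t)\|^2$ and the conclusion goes through; just make sure to record this extra term rather than the oversimplified estimate.
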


The existence of a solution is proved in \cite{Sapio} under much weaker assumptions on the cracks $\Gamma_t$. The uniqueness will be proved in the next section.

\section{Uniqueness}\label{Wave}

In our proof of Theorem \ref{thm:risultato-ex-uniq} we shall use some known results about existence and uniqueness for the system of elastodynamics on cracked domains, where the memory terms is not present. We set
\begin{equation}
    \A:=\C+\V
\end{equation}
and we consider $\A$ as the elasticity tensor of the auxiliary problem defined below.

\begin{definition}[Solution for elastodynamics with cracks]\label{def:weak-sol_wave}
We say that $v$ is a weak solution of problem \eqref{intro-onde} of elastodynamics on the cracked domains $\Omega\setminus\Gamma_t$, with external load $\ell=f-\dive F$,  Dirichlet boundary condition $u_D$ on $\partial_D\Omega$, natural Neumann boundary condition on $\partial_N\Omega\cup\Gamma_t$, and initial conditions $u^0$ and $u^1$, if
\begin{align}
    &\label{eq:onde1} \,\,\,v\in\mathcal{V} \quad \text{and} \quad v-u_D\in \mathcal{V}^D,\\
    &-\int^T_0(\dot v(t),\dot \varphi(t))\,\dt+ \int^T_0 (\A Ev(t),E\varphi(t)) \,\dt = \int^T_0 (f(t),\varphi(t))\,\dt{}  \vspace{1cm}\nonumber \\
    &\label{eq:onde2}  + \int^T_0  (F(t),E\varphi(t))  \,\dt{} \quad \text{for all }  \varphi\in \mathcal{V}^D  \text{ with } \varphi(0)=\varphi(T)=0,\\
    & \label{eq:onde3}\,\,\,v(0)=u^0 \quad \text{in }  H \quad  \text{and} \quad \dot v(0)=u^1 \quad \text{in } (V^D_0)^*.
\end{align}
\end{definition}

The following technical lemma will be used in the proof of Theorem \ref{thm:ex-uniq-onde}.
\begin{lemma}\label{prop:dati-iniz-nulli}
    Let $v$ be a weak solution according to Definition \ref{def:weak-sol_wave} satisfying $\dot v(0)=0$ in the sense of $(V^D_0)^*$. Then \eqref{eq:onde2} holds for every $\varphi\in \mathcal{V}^D$ such that $\varphi(0)\in V^D_0$ and $\varphi(t)=0$ in a neighborhood of $T$, even if the condition $\varphi(0)=0$ is not satisfied.
\end{lemma}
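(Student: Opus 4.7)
The strategy is to approximate an arbitrary $\varphi$ satisfying the enlarged admissibility condition by a sequence $\varphi_\e \in \mathcal{V}^D$ that also satisfies $\varphi_\e(0) = 0$, apply \eqref{eq:onde2} to $\varphi_\e$, and pass to the limit $\e \to 0^+$. Fix $\varphi$ with $\varphi(0) \in V^D_0$ and $\varphi \equiv 0$ on $[T-\delta, T]$ for some $\delta>0$, and for $\e \in (0, \delta)$ define
\[
\varphi_\e(t) := \varphi(t) - \chi_\e(t)\,\varphi(0), \qquad \chi_\e(t) := \max\bigl(1 - t/\e,\, 0\bigr).
\]
Then $\varphi_\e(0)=0$, $\varphi_\e$ vanishes near $T$, and $\varphi_\e(t) = \varphi(t)$ for $t\geq \e$. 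Because $\Gamma_0 \subset \Gamma_t$ gives a natural inclusion $V^D_0 \hookrightarrow V^D_t$, the pointwise constraint $\varphi_\e(t) \in V^D_t$ is preserved, so $\varphi_\e \in \mathcal{V}^D$, with distributional time derivative $\dot\varphi_\e(t) = \dot\varphi(t) + \e^{-1}\chi_{(0,\e)}(t)\,\varphi(0)$.

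Writing \eqref{eq:onde2} for $\varphi_\e$ and subtracting the analogous quantities for $\varphi$ produces four boundary-layer corrections. The three coming from $(\A Ev, E\varphi)$, $(f, \varphi)$ and $(F, E\varphi)$ are supported on $[0,\e]$ with uniformly integrable integrands, and therefore vanish as $\e \to 0^+$ by dominated convergence. The only delicate contribution is
\[
-\int_0^T (\dot v(t),\, \dot\varphi_\e(t)-\dot\varphi(t))\,\dt = -\frac{1}{\e}\int_0^\e (\dot v(t),\, \varphi(0))\,\dt,
\]
which is singular since $\dot v$ is only known to lie in $L^2(0,T;H)$ a priori.

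To handle this term I invoke the regularity $\dot v \in C^0([0,T]; (V^D_0)^*)$, which is proved in \cite{Sapio} (see Remark \ref{remark 3.5}) and is precisely what gives meaning to the initial condition $\dot v(0) \in (V^D_0)^*$ in \eqref{eq:onde3}. Since $\varphi(0) \in V^D_0$, the $H$-inner product $(\dot v(t),\varphi(0))$ coincides with the duality pairing $\langle \dot v(t),\varphi(0)\rangle_{(V^D_0)^*, V^D_0}$; continuity at $t=0$ together with the hypothesis $\dot v(0)=0$ then gives
\[
\frac{1}{\e}\int_0^\e (\dot v(t),\varphi(0))\,\dt \;\longrightarrow\; \langle \dot v(0),\varphi(0)\rangle = 0.
\]
Passing to the limit in the weak formulation applied to $\varphi_\e$ yields the asserted identity for $\varphi$. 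The main obstacle is precisely this singular limit, where the integrability of $\dot v$ alone is insufficient and one must exploit the sharper regularity $\dot v \in C^0([0,T];(V^D_0)^*)$; all other convergences reduce to routine dominated convergence.
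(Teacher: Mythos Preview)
Your proof is correct and follows essentially the same approach as the paper: approximate $\varphi$ by test functions $\varphi_\e$ vanishing at $t=0$, isolate the singular term $\tfrac{1}{\e}\int_0^\e(\dot v(t),\varphi(0))\,\dt$, and use the initial condition $\dot v(0)=0$ in $(V^D_0)^*$ to kill it. The only cosmetic difference is that the paper builds $\varphi_\e$ by a time-shift ($\varphi_\e(t)=\tfrac{t}{\e}\varphi(0)$ on $[0,\e]$ and $\varphi(t-\e)$ on $(\e,T]$), handling the remaining terms via continuity of translations, whereas you use a cutoff $\varphi_\e=\varphi-\chi_\e\varphi(0)$ and dominated convergence; both reductions lead to the identical singular limit.
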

\begin{proof}
Let $\varphi$ as in the statement. For every $\varepsilon>0$, we define
\begin{equation*}
   \varphi_\varepsilon(t):= \begin{cases}
    \frac{t}{\varepsilon} \varphi(0) & \textnormal{for }t\in [0,\, \varepsilon],\\
    \varphi(t-\varepsilon) & \textnormal{for } t \in (\varepsilon, \, T ].
    \end{cases}
\end{equation*}
Then $\varphi_\varepsilon\in \mathcal{V}^D$ and $\varphi_\varepsilon(0)=\varphi_\varepsilon(T)=0$, for $\varepsilon$ small enough \eqref{eq:onde2} holds for $\varphi_\e$. We observe that
\begin{equation*}
    \int^T_0(\dot v(t),\dot \varphi_\e(t))\,\dt = \frac{1}{\e}\int^\e_0(\dot v(t), \varphi(0))\,\dt  + \int^T_\e(\dot v(t),\dot \varphi(t-\e))\,\dt \to  \int^T_0 (\dot v(t),\dot \varphi(t))\,\dt
\end{equation*}
as $\e \to 0$, where we have used the initial condition in the first term and the continuity of translations in the second one. In a similar way we can pass to the limit in the other terms of equation \eqref{eq:onde2}.
\end{proof}

We are now in a position to state the existence and uniqueness result for the solutions of elastodynamics with cracks.

\begin{theorem}\label{thm:ex-uniq-onde}
Assume {\rm(H1)-(H15)} and \eqref{eq:forzante-f}-\eqref{eq:comp-conditt}. Then there exists a unique solution $v$ of problem \eqref{eq:onde1}-\eqref{eq:onde3}. Moreover $ v \in \mathcal{V}^\infty$, $v\in C^0_w([0,T];V)$, and $\dot v\in C^0_w([0,T];H)$.
\end{theorem}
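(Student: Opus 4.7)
Existence of a solution, together with the regularity $v\in\mathcal{V}^\infty$, $v\in C^0_w([0,T];V)$, and $\dot v\in C^0_w([0,T];H)$, is proved in \cite{Sapio} under hypotheses on $\Gamma_t$ that are strictly weaker than (H7)-(H15), via a standard Galerkin scheme combined with uniform energy estimates. I would simply quote that result; no new argument is required for this part.

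The substantive content is uniqueness. By linearity of (\ref{eq:onde1})-(\ref{eq:onde3}), it suffices to show that if $v$ is a solution with $f=0$, $F=0$, $u_D=0$, $u^0=0$, $u^1=0$, then $v\equiv 0$. This is essentially the uniqueness theorem of \cite{DalMaso-Luc} (scalar wave equation) and \cite{Caponi} (elastodynamic system), and the plan is to import that argument. It proceeds in three steps. First, use the diffeomorphism $\Phi$ from (H11)-(H14) to pull the solution back to the fixed configuration $\Omega\setminus\Gamma_0$: since (H13) gives $\Phi(t,\Gamma_0)=\Gamma_t$, the transformed map $\tilde v(t,y):=v(t,\Phi(t,y))$ lies in the time-independent space $V_0^D$, at the price of turning (\ref{eq:onde2}) into an equation with time-dependent coefficients involving $D\Phi$, $D\Psi$, and $\dot\Phi$. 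Second, for a fixed $s\in(0,T]$ introduce the time-integrated test function $\varphi(t):=\int_t^s v(\tau)\,\dtau$ for $t\le s$ and $\varphi(t):=0$ for $t>s$. Since $\dot v(0)=0$, Lemma \ref{prop:dati-iniz-nulli} licenses the use of $\varphi$ in (\ref{eq:onde2}) despite $\varphi(0)\neq 0$. Third, plug $\varphi$ into the pulled-back weak formulation and integrate by parts in $t$: the principal part of the resulting identity is
\begin{equation*}
\tfrac 12\|v(s)\|^2 + \tfrac 12(\A E\varphi(0),E\varphi(0)),
\end{equation*}
and the remaining terms involve $\dot\Phi$ and the spatial derivatives of $\Phi,\Psi$. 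Korn's inequality (Lemma \ref{rem:Korn_ineq}), the coerciveness bounds (H3) and (H6), and hypothesis (H15) are then invoked to absorb these lower-order terms, leaving an estimate of Gronwall type in $\int_0^s\|v(\tau)\|^2\,\dtau$; Gronwall's lemma forces $v\equiv 0$ on $[0,s]$, and since $s$ is arbitrary, $v\equiv 0$ on $[0,T]$.

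The main obstacle is the absorption in the third step, and it is precisely what dictates the form of (H15). After the change of variables, $|\dot\Phi|^2$ multiplies a term with the same scaling in $E\varphi$ as the $\A$-coercive contribution, and the explicit threshold $|\dot\Phi|^2<\frac{m_{det}(\Psi)\alpha_0}{M_{det}(\Psi)K}$ is calibrated so that the net coefficient in front of $\|E\varphi(0)\|^2$ remains strictly positive after the absorption: the factors $m_{det}(\Psi)$, $M_{det}(\Psi)$ arise from the Jacobian of the change of variables, $\alpha_0$ from (H3) and (H6), and $K$ from Korn's inequality. A secondary technical point is to check that the test function really lies in $\mathcal{V}^D$ after the pullback-pushforward: this uses the $C^{2,1}$ regularity in (H11), which ensures that $\dot\varphi$ has the integrability needed in the definition (\ref{eq:Vcorsivo}) of $\mathcal{V}$.
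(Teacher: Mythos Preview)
Your proposal is correct and follows essentially the same approach as the paper: existence (with bounded energy) is quoted from the literature, and uniqueness is the pullback-plus-Ladyzhenskaya-test-function argument, with Lemma~\ref{prop:dati-iniz-nulli} used precisely to handle the fact that the integrated test function does not vanish at $t=0$. The paper's own proof is almost entirely by citation---it points to \cite{Caponi}, \cite{Tasso} for existence and to \cite[Example 4.2 and Theorem 4.3]{DalMaso-Toader} for uniqueness (rather than \cite{DalMaso-Luc}, \cite{Caponi})---and then derives $v\in\mathcal{V}^\infty$ and the weak continuity a posteriori from uniqueness plus the existence of a bounded-energy solution, invoking \cite[Chapitre XVIII, \S 5, Lemme 6]{D-L_V8}; your sketch simply unpacks what those references do.
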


\begin{proof}
In the case $F=0$ the existence result, together with an energy bound, is proved in \cite{Caponi} and \cite{Tasso} (a previous result in the scalar case is proved in \cite{Dalmaso-Larsen}). When $F$ is present, the same proof can be repeated with obvious modifications (for instance it is enough to repeat the arguments of \cite{Sapio} with $\V=0$). 

As for uniqueness, it can be proved as in \cite[Example 4.2 and Theorem 4.3]{DalMaso-Toader}. Since in that paper the initial conditions are given in a different sense, we have to replace \cite[Proposition 2.10]{DalMaso-Toader} by our Lemma \ref{prop:dati-iniz-nulli}. The uniqueness result and the existence of a solution with bounded energy imply that the solution satisfies $ v \in \mathcal{V}^\infty$. This fact, together with the continuity of $v$ in $H$ and $\dot v\in (V_0^D)^*$ (Remark~\ref{remark 3.5}), implies that $v\in C^0_w([0,T];V)$ and $\dot v\in C^0_w([0,T];H)$ (see, e.g., \cite[Chapitre XVIII, §5, Lemme 6]{D-L_V8}).
\end{proof}
For every $v\in  C^0_w([0,T];V)$, with $\dot v \in C^0_w([0,T];H)$, the energy of $v$ is defined for every $ t \in [0,T]$ as
\begin{equation}
\mathcal{E}_v(t):=\frac{1}{2}\|\dot{v}(t)\|^2 +  \frac{1}{2}(\A Ev(t),Ev(t)).
\end{equation}
Under the same assumption on $v$, when $u_D=0$ the work done by the external forces on the displacement $v$ in the time interval $[0,t] \subset [0,T]$ can be written as
\begin{equation}
    \mathcal{W}_v(t):= \int^t_0  ({f}(s), \dot{v}(s) )  \, \ds - \int^t_0 (\dot F(s), Ev(s))\, \ds + ( F(t), Ev(t)) - ( F(0), Ev(0)),
\end{equation}
see for instance \cite[Remarks 5.9 and 5.11]{Sapio}.
\begin{theorem}
Under the assumptions of Theorem \ref{thm:ex-uniq-onde}, if $u_D=0$, then the unique solution $v$ of problem \eqref{eq:onde1}-\eqref{eq:onde3} satisfies the energy inequality
\begin{equation}\label{eq:energy-bal-wave}
    \mathcal{E}_v(t)\leq \mathcal{E}_v(0) + \mathcal{W}_v(t) \quad \text{for all }\,t\in [0,T].
\end{equation}
\end{theorem}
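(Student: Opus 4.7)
The plan is to obtain \eqref{eq:energy-bal-wave} by passing to the limit in a discrete energy inequality satisfied by the time-discrete approximations used in the existence proofs of \cite{Caponi}, \cite{Tasso}, and \cite{Sapio}. Fix $n\in\mathbb{N}$, partition $[0,T]$ with step $\tau_n=T/n$, and let $t_k^n=k\tau_n$. Define $v_n^k\in V_{t_k^n}$ inductively by the incremental variational problem
\begin{equation*}
\Big(\tfrac{v_n^{k+1}-2 v_n^{k}+v_n^{k-1}}{\tau_n^2},\varphi\Big)+(\A E v_n^{k+1},E\varphi)=(f_n^k,\varphi)+(F_n^k,E\varphi)\quad\forall\,\varphi\in V^{D}_{t_{k+1}^n},
\end{equation*}
with $v_n^0=u^0$, $v_n^1=u^0+\tau_n u^1$, and $f_n^k$, $F_n^k$ time averages of $f$, $F$ on $[t_k^n,t_{k+1}^n]$. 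Testing with $\varphi=v_n^{k+1}-v_n^k$ (which belongs to $V^D_{t_{k+1}^n}$ thanks to the irreversibility (H10), which yields $V^D_{t_k^n}\subset V^D_{t_{k+1}^n}$), using the elementary inequality $2(a-b)\cdot a\ge |a|^2-|b|^2$, and summing over $k$, I obtain after a telescopic cancellation the discrete bound
\begin{equation*}
\mathcal{E}_{v_n}(t_k^n)\le \mathcal{E}_{v_n}(0)+\mathcal{W}_{v_n}(t_k^n)+R_n,\qquad R_n\to 0,
\end{equation*}
where $v_n$ is the piecewise affine time interpolant. The inequality (not equality) stems exactly from the fact that the quadratic energy produced on the newly opened portion of the crack is discarded.

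Next, I would pass to the limit. From the a priori bounds proved in \cite{Caponi}--\cite{Sapio}, up to a subsequence $v_n\rightharpoonup v$ in $L^2(0,T;V)\cap H^1(0,T;H)$, and the limit is the unique solution by Theorem~\ref{thm:ex-uniq-onde}. Because $v_n$ is bounded in $\mathcal{V}^\infty$, a diagonal extraction and the weak continuity $v\in C^0_w([0,T];V)$, $\dot v\in C^0_w([0,T];H)$ give $v_n(t)\rightharpoonup v(t)$ in $V$ and $\dot v_n(t)\rightharpoonup \dot v(t)$ in $H$ for every $t\in[0,T]$. Since $\A$ is symmetric and non-negative definite by (H2), (H3), (H5), (H6), the quadratic form $w\mapsto (\A Ew,Ew)$ is weakly lower semicontinuous on $V$, and $\|\cdot\|^2$ is weakly lower semicontinuous on $H$, so $\mathcal{E}_v(t)\le\liminf_{n}\mathcal{E}_{v_n}(t)$. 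On the right-hand side each term of $\mathcal{W}_{v_n}(t)$ converges to the corresponding term of $\mathcal{W}_v(t)$: the time integrals by pairing the weak $L^2$ limits $\dot v_n\rightharpoonup\dot v$ and $Ev_n\rightharpoonup Ev$ against the fixed data $f$ and $\dot F$, and $(F(t),Ev_n(t))\to(F(t),Ev(t))$ by the pointwise weak convergence of $Ev_n(t)$ in $\tilde H$ (and analogously at $t=0$). Passing to the limit in the discrete inequality then yields \eqref{eq:energy-bal-wave} at each $t\in[0,T]$.

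The main obstacle is the first step: arranging the incremental scheme and the choice of test functions so that the discrete argument really produces an inequality with a controllable remainder in the presence of the growing cracks $\Gamma_t$. This is precisely where the monotonicity of the spaces $V_t^D$ from (H10) is used, and is the technical heart of the construction in \cite{Caponi} and \cite{Tasso}. Once the discrete inequality is in place, the presence of the loading term $F$ introduces no additional difficulty beyond the integration-by-parts rewriting already built into the definition of $\mathcal{W}_v$, with the modifications indicated in \cite{Sapio}, and the limit passage above is standard.
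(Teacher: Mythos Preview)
The paper does not prove this theorem itself; it simply writes ``For a proof we refer to \cite[Corollary~3.2]{DalMaso-Toader} and \cite[Remark~5.11]{Sapio}.'' Your sketch reconstructs precisely the standard route used in those references and in \cite{Caponi,Tasso}: derive a discrete energy inequality for the time-step approximants of the existence proof by testing with $v_n^{k+1}-v_n^k$ (admissible thanks to the monotonicity $V^D_{t_k^n}\subset V^D_{t_{k+1}^n}$ coming from (H10)), and then pass to the limit using weak lower semicontinuity of $\mathcal{E}$ and weak convergence in the terms of $\mathcal{W}$. So your approach is correct and matches what the paper defers to.

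Two small remarks. First, the initialization $v_n^1=u^0+\tau_n u^1$ is not in $V_{t_1^n}$ in general, since only $u^1\in H$ is assumed; one must first replace $u^1$ by approximations $u^{1,n}\in V_0$ with $u^{1,n}\to u^1$ in $H$ (then $\mathcal{E}_{v_n}(0)\to\mathcal{E}_v(0)$ and nothing else changes). Second, the sentence ``the inequality stems exactly from the fact that the quadratic energy produced on the newly opened portion of the crack is discarded'' is a slight misattribution at the discrete level: the inequality there comes from the convexity identity $2(\A a,a-b)=(\A a,a)-(\A b,b)+(\A(a-b),a-b)$, which would hold even without crack growth. The role of the growing crack is rather that the nesting of the spaces allows this test, and that in the limit one cannot upgrade weak to strong convergence, so the inequality does not become an equality.
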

For a proof we refer to \cite[Corollary 3.2]{DalMaso-Toader} and \cite[Remark 5.11]{Sapio}.

\begin{proposition}\label{prop:energ-soloF}
    Under the assumptions of Theorem \ref{thm:ex-uniq-onde}, suppose in addition that $u_D=0$ and $u^0=0$. Then there exists a positive constants $A$, depending on the constant $K$ in Korn's inequality \eqref{eq:equiv-norm} and on the constant $\alpha_0$ in (H3), but not on $T$, $f$, $F$, and $u^1$, such that the solution $v$ of problem \eqref{eq:onde1}-\eqref{eq:onde3} satisfies
    \begin{equation}\label{eq:cont-ris-soloF}
        \|v\|_{\mathcal{V}^\infty}\leq A(1+T)\Big(\|u^1\|+\|F \|_{L^\infty(0,T;\tilde{H})}+ T^{1/2}(\|\dot F \|_{L^2(0,T;\tilde{H})}+\|f \|_{L^2(0,T;{H})})\Big).
    \end{equation}
\end{proposition}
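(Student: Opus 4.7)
The plan is to apply the energy inequality~\eqref{eq:energy-bal-wave}, together with the coerciveness of $\A$, two applications of Young's inequality, and Korn's inequality~\eqref{eq:equiv-norm}; no Gr\"onwall argument is needed because the memory term is absent in~\eqref{eq:onde1}-\eqref{eq:onde3}. Since $u^0=0$ we have $Ev(0)=0$, so $\mathcal{E}_v(0)=\tfrac12\|u^1\|^2$ and the endpoint contribution $(F(0),Ev(0))$ in $\mathcal{W}_v(t)$ vanishes; hence~\eqref{eq:energy-bal-wave} reads
\begin{equation*}
\tfrac12\|\dot v(t)\|^2+\tfrac12(\A Ev(t),Ev(t))\leq \tfrac12\|u^1\|^2+\int_0^t(f,\dot v)\,\ds-\int_0^t(\dot F,Ev)\,\ds+(F(t),Ev(t)).
\end{equation*}
Hypotheses (H3) and (H6) give $(\A A,A)\geq 2\alpha_0|A|^2$; a first Young inequality on the boundary term produces $|(F(t),Ev(t))|\leq \tfrac{\alpha_0}{2}\|Ev(t)\|^2+\tfrac{1}{2\alpha_0}\|F(t)\|^2$, and after absorption the left-hand side becomes $\tfrac12\|\dot v(t)\|^2+\tfrac{\alpha_0}{2}\|Ev(t)\|^2$.

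I would then introduce
\begin{equation*}
N(t):=\sup_{s\in[0,t]}\bigl(\tfrac12\|\dot v(s)\|^2+\tfrac{\alpha_0}{2}\|Ev(s)\|^2\bigr),
\end{equation*}
which is finite because $v\in\mathcal{V}^\infty$. Since $\|\dot v(s)\|\leq\sqrt{2N(t)}$ and $\|Ev(s)\|\leq\sqrt{2N(t)/\alpha_0}$ for $s\in[0,t]$, Cauchy-Schwarz gives
\begin{equation*}
\int_0^t\|f\|\|\dot v\|\,\ds\leq T^{1/2}\|f\|_{L^2(0,T;H)}\sqrt{2N(t)}, \quad \int_0^t\|\dot F\|\|Ev\|\,\ds\leq T^{1/2}\|\dot F\|_{L^2(0,T;\tilde H)}\sqrt{2N(t)/\alpha_0}.
\end{equation*}
A second Young's inequality absorbs each $\sqrt{N(t)}$ factor into $\tfrac14 N(t)$; taking the supremum in $t\in[0,T]$, and using $\|F(t)\|\leq\|F\|_{L^\infty(0,T;\tilde H)}$, yields
\begin{equation*}
N(T)\leq C_1\bigl(\|u^1\|^2+\|F\|^2_{L^\infty(0,T;\tilde H)}+T\|f\|^2_{L^2(0,T;H)}+T\|\dot F\|^2_{L^2(0,T;\tilde H)}\bigr),
\end{equation*}
with $C_1$ depending only on $\alpha_0$.

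To recover $\|v\|_{\mathcal{V}^\infty}$, I would use $v(t)=\int_0^t\dot v(s)\,\ds$ (since $v(0)=0$) to obtain $\|v(t)\|\leq T\|\dot v\|_{L^\infty(0,T;H)}\leq T\sqrt{2N(T)}$; Korn's inequality (Lemma~\ref{rem:Korn_ineq}) then gives $\|Dv(t)\|\leq K^{1/2}(\|v(t)\|+\|Ev(t)\|)$, so $\|v\|_{L^\infty(0,T;V)}\leq C_2(1+T)\sqrt{N(T)}$ with $C_2=C_2(K,\alpha_0)$. Combining with $\|\dot v\|_{L^\infty(0,T;H)}\leq\sqrt{2N(T)}$ and distributing the square root over the four non-negative summands inside $N(T)$ delivers~\eqref{eq:cont-ris-soloF} with $A$ depending only on $K$ and $\alpha_0$.

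The main point requiring care is the double Young's inequality step: neither of the absorption constants may depend on $T$, otherwise the prefactor $(1+T)$ in~\eqref{eq:cont-ris-soloF} would have to be replaced by a larger quantity. As a result, the only occurrences of $T$ in the final estimate are the explicit $T^{1/2}$ produced by Cauchy-Schwarz on $[0,T]$ and the factor $(1+T)$ coming from the elementary bound $\|v(t)\|\leq T\|\dot v\|_{L^\infty(0,T;H)}$.
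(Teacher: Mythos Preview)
Your proof is correct and follows essentially the same route as the paper's: energy inequality~\eqref{eq:energy-bal-wave}, coerciveness of $\A$, Cauchy--Schwarz in time, absorption into the left-hand side, then $v(t)=\int_0^t\dot v$ and Korn to pass from $Ev$ to the full $V$-norm. The only cosmetic difference is that the paper works with $S:=\sup_t(\|\dot v(t)\|^2+\|Ev(t)\|^2)^{1/2}$ and obtains a linear bound on $S$ directly, whereas you introduce $N(t)$ and make the two Young-inequality absorptions explicit; the two arguments are equivalent, and your version is arguably cleaner in tracking that the absorption constants do not depend on $T$.
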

\begin{proof}
Under our assumption we have
\begin{equation*}
    \mathcal{W}_v(t):= \int^t_0 ({f}(s), \dot{v}(s)) \ds - \int^t_0 (\dot F(s), Ev(s))\, \ds + ( F(t), Ev(t))\quad \text{and} \quad \mathcal{E}_v(0)=\frac{1}{2}\|u^1\|^2.
\end{equation*}
Recalling (H3), (H6), and \eqref{eq:energy-bal-wave} we have
\begin{align*} 
\frac{1}{2}\|\dot v(t)\|^2 + \frac{\alpha_0}{2}\|Ev(t)\|^2 &\leq T^{1/2}\|\dot F \|_{L^2(0,T;\tilde{H})}\|Ev\|_{L^\infty(0,T;\tilde{H})} + \|F \|_{L^\infty(0,T;\tilde{H})}\|Ev\|_{L^\infty(0,T;\tilde{H})}\\
&+ T^{1/2}\|f\|_{L^2(0,T;H)}\|\dot{v}\|_{L^\infty(0,T;H)} + \frac{1}{2}\|u^1\|^2.
\end{align*}
for all $t\in [0,T]$. We set
\begin{equation*}
    S:=\sup_{t\in [0,T]} (\|\dot v(t)\|^2 + \|Ev(t)\|^2)^{1/2}.
\end{equation*}
From the previous inequality we obtain
\begin{equation}\label{contrS}
    \min\{1/2,\,\alpha_0/2\}S \leq T^{1/2}\|\dot F \|_{L^2(0,T;\tilde{H})} + \|F \|_{L^\infty(0,T;\tilde{H})} + T^{1/2}\|f\|_{L^2(0,T;H)} + \|u^1\|.
\end{equation}
Since $v(t)=\int^t_0 \dot v(s) \,\ds$ we have
$
\sup_{t\in[0,T]}\|v(t)\|\leq TS
$. Using Korn's inequality \eqref{eq:equiv-norm} we obtain
$
\sup_{t\in[0,T]}\| Dv(t)\|\leq K^{1/2}S. 
$
Therefore
\begin{equation*}
    \|u\|_{\mathcal{V}^\infty} \leq S + K^{ 1/2}S + TS,
\end{equation*}
which, together with \eqref{contrS}, gives \eqref{eq:cont-ris-soloF}.
\end{proof}

Let $\mathcal{L}:\mathcal{V}^\infty\longrightarrow H^1(0,T; \tilde{H}) $ be the linear operator defined by
\begin{equation}
    (\mathcal{L}u)(t):=\int^t_0  \textnormal{e}^{\tau-t} \V Eu(\tau) \,\dtau
\end{equation}
for every $u\in \mathcal{V}^\infty$ and $t\in [0,T]$. Since
\begin{equation*}
     (\dot{\wideparen{\mathcal{L}u}})(t)=  \V Eu(t) -  \int^t_0  \textnormal{e}^{\tau-t} \V Eu(\tau) \,\dtau,
\end{equation*}
it is easy to check that $\mathcal{L}$ is bounded. Indeed we have
\begin{equation}\label{eq:Lu-inf}
    \|\mathcal{L}u\|_{L^\infty(0,T;\tilde{H})} \leq T \|\V\|_{\infty} \| u \|_{\mathcal{V}^\infty},
\end{equation}
\begin{equation}\label{eq:dotLu-inf}
     \|\dot{\wideparen{\mathcal{L}u}}\|_{L^2(0,T;\tilde{H})}\leq(T^{1/2}+T^{3/2})\|\V\|_{\infty} \| u \|_{\mathcal{V}^\infty}.
\end{equation}

\begin{corollary}\label{cor:stima-ris-datinulli}
 Under the assumptions of Theorem \ref{thm:ex-uniq-onde} there exists a positive constant $B$, depending on the constant $K$ in Korn's inequality \eqref{eq:equiv-norm} and on the constant $\alpha_0$ in (H3), but not on $T$ and $\V$, such that, if $u$ satisfies \eqref{eq:onde1}-\eqref{eq:onde3} with $u^0$, $u^1$, $u_D$, and $f$ replaced by zero and $F$ replaced by $\mathcal{L}u$, then
 \begin{equation}\label{eq:contris-soloLu}
        \|u\|_{\mathcal{V}^\infty}\leq B (T+ T^3)\|\V\|_{\infty} \| u \|_{\mathcal{V}^\infty}.
    \end{equation}
\end{corollary}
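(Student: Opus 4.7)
The plan is to apply Proposition~\ref{prop:energ-soloF} directly to the function $u$ in the hypothesis, with the specific choices $u^0=0$, $u^1=0$, $u_D=0$, $f=0$, and $F=\mathcal{L}u$, and then insert the operator bounds \eqref{eq:Lu-inf} and \eqref{eq:dotLu-inf} on the right-hand side. Since $u$ is exactly the solution of the elastodynamics problem with these data, Proposition~\ref{prop:energ-soloF} is applicable and yields
\begin{equation*}
\|u\|_{\mathcal{V}^\infty} \leq A(1+T)\bigl(\|\mathcal{L}u\|_{L^\infty(0,T;\tilde{H})} + T^{1/2}\|\dot{\wideparen{\mathcal{L}u}}\|_{L^2(0,T;\tilde{H})}\bigr),
\end{equation*}
where $A$ depends only on $K$ and $\alpha_0$.

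Next, I would substitute \eqref{eq:Lu-inf} and \eqref{eq:dotLu-inf} into the right-hand side. The first contributes a term of order $T$ and the second contributes a term of order $T^{1/2}(T^{1/2}+T^{3/2}) = T + T^2$, so after multiplying by the prefactor $A(1+T)$ we obtain an estimate of the form
\begin{equation*}
\|u\|_{\mathcal{V}^\infty} \leq A(1+T)(2T + T^2)\|\V\|_\infty\|u\|_{\mathcal{V}^\infty} = A(2T + 3T^2 + T^3)\|\V\|_\infty\|u\|_{\mathcal{V}^\infty}.
\end{equation*}
Finally I would absorb the $T$ and $T^2$ terms into a multiple of $T + T^3$, using the elementary observation that $T^2 \leq T + T^3$ for every $T>0$ (equivalently, $T^2 \leq \max(T,T^3)$). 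Choosing $B$ large enough (for instance $B = 6A$) gives the claimed bound.

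Since every step is an application of a result already proved in the excerpt, there is no serious obstacle; the only point requiring a moment of care is tracking the powers of $T$ so that the final constant $B$ depends only on $K$ and $\alpha_0$ and not on $T$ or $\V$, which is ensured by the fact that $A$ has this property and the extra factor $\|\V\|_\infty$ is kept explicit throughout.
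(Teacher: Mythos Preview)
Your proof is correct and follows essentially the same approach as the paper: apply Proposition~\ref{prop:energ-soloF} with $u^0=u^1=0$, $u_D=0$, $f=0$, $F=\mathcal{L}u$, insert the bounds \eqref{eq:Lu-inf} and \eqref{eq:dotLu-inf}, and observe that the resulting polynomial $2T+3T^2+T^3$ is dominated by a constant times $T+T^3$. The paper writes the intermediate expression as $A\bigl((1+T)T + (T^{1/2}+T^{3/2})^2\bigr)$, which expands to the same $2T+3T^2+T^3$ you obtained.
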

\begin{proof}
By Proposition \ref{prop:energ-soloF}, \eqref{eq:Lu-inf}, and \eqref{eq:dotLu-inf} we have
\begin{equation*}
        \|u\|_{\mathcal{V}^\infty}\leq A\Big((1+T)T + (T^{1/2}+ T^{3/2})^2\Big)\|\V\|_{\infty} \| u \|_{\mathcal{V}^\infty},
    \end{equation*}
    which implies \eqref{eq:contris-soloLu}.
\end{proof}

We are now in a position to prove the uniqueness result.

\begin{proof}[Proof of Theorem \ref{thm:risultato-ex-uniq}]
The existence result is obtained in [11] under more general hypotheses. To prove uniqueness, we assume by contradiction that there exist two distinct solution $u_1$ and $u_2$ of problem \eqref{eq:weak_generalizzata1}-\eqref{eq:weak_generalizzata3}. Then $u:=u_1-u_2$ is a solution of the same problem with $u^0$, $u^1$, $u_D$, $f$, and $F$ replaced by zero. Therefore $u$ satisfies \eqref{eq:onde1}-\eqref{eq:onde3} with 
$u^0$, $u^1$, $u_D$, and $f$ replaced by zero
and $F$ replaced by $\mathcal{L}u$. By Theorem \ref{thm:ex-uniq-onde} this implies that $u\in C_w([0,T]; V)$ and $\dot u\in C_w([0,T]; H)$.

We set
\begin{equation*}
    t_0:=\inf\{t\in [0,T]\,|\,u(t)\neq 0\}.
\end{equation*}
Since $u$ is not identically zero, we have $t_0<T$. We fix $\delta\in (0,T-t_0)$ such that
\begin{equation}\label{eq:condT-delta}
    B( \delta +\delta^{3})\|\V\|_{\infty} < 1,
\end{equation}
where $B$ is the constant in \eqref{eq:contris-soloLu}, and we define $t_1:=t_0+\delta$. In order to study the problem on $[t_0,\,t_1]$ we define the spaces $\mathcal{V}^D_{t_0,t_1}$ and $\mathcal{V}^\infty_{t_0,t_1}$ as $\mathcal{V}^D$ and $\mathcal{V}^\infty$ (see \eqref{eq:VcorsivoD} and \eqref{eq:defVinfty}), with $0$ and $T$ replaced by $t_0$ and $t_1$. 

It is clear that $u \in \mathcal{V}^D_{t_0,t_1}$ and since $Eu(\tau)=0$ for every $\tau \in [0,t_0]$ we have
\begin{align*}
    &-\int^{t_1}_{t_0}(\dot u(t),\dot \varphi(t))\,\dt+ \int^{t_1}_{t_0} (\A Eu(t),E\varphi(t)) \,\dt - \int^{t_1}_{t_0}\int^t_{t_0} \textnormal{e}^{\tau-t} (\V Eu(\tau), E\varphi(t))\,\dtau\dt=0\vspace{1cm}\nonumber 
\end{align*}
for every $\varphi\in \mathcal{V}^D_{t_0,t_1}$ such that $\varphi({t_0})=\varphi({t_1})=0$. Moreover, since $u\in C_w([0,T]; V)$, $\dot u\in C_w([0,T]; H)$, and $u$ is identically zero on $[0,t_0]$, we have that  $u(t_0)=0$ and $\dot u(t_0)=0$. By \eqref{eq:contris-soloLu}, applied with $0$ and $T$ replaced by $t_0$ and $t_1$, we have
\begin{equation*}
        \|u\|_{\mathcal{V}^\infty_{t_0,t_1}}\leq B( \delta +\delta^{3})\|\V\|_{\infty}   \|u \|_{\mathcal{V}^\infty_{t_0,t_1}}.
\end{equation*}
Using \eqref{eq:condT-delta} we obtain $u=0$ on $[t_0,t_1]$. This contradicts the definition of $t_0$ and concludes the proof.
\end{proof}

\section{Continuous dependence on the data}

In this section we consider a sequence $\{\Gamma^n_t\}_{t\in[0,T]}$ of time dependent cracks and we want to study the convergence, as $n\to+\infty$, of the solutions of the corresponding viscoelastic problems. For completeness we assume that also the other data of the problem depend on~$n$.

For every $n \in \mathbb{N}$, let $\C^n,\, \V^n \colon \overline{\Om} \to \mathcal{L}(\R^{d\times d}_{sym}; \R^{d\times d}_{sym})$, let $\Gamma^n$ be a $(d - 1)$-dimensional $C^2$ manifold, let $\{\Gamma^n_t\}_{t\in [0,T]}$ be a family of closed subsets of $\Gamma^n$, and let $\Phi^n,\,\Psi^n\colon[0,T]\times \overline{\Omega}\to\overline{\Omega}$. We assume that
\begin{itemize}
    \item[(H16)] $\C^n,\, \V^n$ satisfy  (H1)-(H6) with constants $\alpha_0$ and $M_0$ independent of $n$;\smallskip
    \item[(H17)] $\Gamma^n$ and $\{\Gamma^n_t\}_{t\in [0,T]}$ satisfy (H7)-(H10);\smallskip
    \item[(H18)] $\Phi^n,\,\Psi^n$ satisfy (H11)-(H15) (with $\Gamma$ and $\Gamma_t$ replaced by $\Gamma^n$ and $\Gamma^n_t$), the latter with the constant $K$ that appears in \eqref{eq:korn-unif}.
\end{itemize}
Let $\Rdd$ be the space of $d\times d$ real matrices. For every pair of normed spaces $X$ and $Y$ let $\mathcal{L}(X;Y)$ be the space of linear and continuous maps between $X$ and $Y$. For every $x\in \overline{\Omega}$ it is convenient to consider the extensions $\C_e(x),\,\V_e(x),\,\C^n_e(x),\,\V^n_e(x)\in \mathcal{L}(\Rdd{};\Rdd_{sym})$ of the linear maps $\C(x),\,\V(x),\,\C^n(x),\,\V^n(x)$ defined as
\begin{equation}\label{eq:tensori-estesi}
\C^n_e(x)[A]:=\C^n(x)[A_{sym}] \quad\text{and} \quad \V^n_e(x)[A]:=\V^n(x)[A_{sym}] \quad \text{for all } A \in \Rdd{},
\end{equation}
\begin{equation}\label{eq:tensori-estesi-limite}
\C_e(x)[A]:=\C(x)[A_{sym}] \quad\text{and} \quad \V_e(x)[A]:=\V(x)[A_{sym}] \quad \text{for all } A \in \Rdd{},
\end{equation}
where $A_{sym}$ is the symmetric part of the matrix $A$. Moreover we set
\begin{equation}\label{eq:A-An}
    \mathbb{A}^n_e:=\C^n_e+\V^n_e \quad \text{and} \quad
    \mathbb{A}_e:=\C_e+\V_e.
\end{equation}

For technical reasons we use a change of variable which maps $\Gamma^n_0$ into $\Gamma_0$. This is done by means of diffeomorphisms $\Theta^n,\,\Xi^n\colon\overline{\Omega} \to \overline{\Omega}$ such that
\begin{itemize}
\item[(H19)] $\Theta^n$ and $\Xi^n$ are of class $C^{2,1}$;\smallskip
\item[(H20)]  $\Theta^n( \Xi^n( x)) = x$ and $\Xi^n( \Theta^n(x)) = x$ for every $x\in \overline{\Om}$;\smallskip
\item[(H21)] $\det D\Theta^n(x)>0$ for every $x \in \overline{\Om{}}$;\smallskip
\item[(H22)] $\Theta^n(\Gamma \cap \overline{\Omega}) = \Gamma^n \cap \overline{\Omega},$ and $ \Theta^n(\Gamma_0) = \Gamma^n_0$;\smallskip
\item[(H23)] $ \Theta^n(\partial_D\Omega) = \partial_D\Omega$ and $ \Theta^n(\partial_N\Omega) = \partial_N\Omega$.
\end{itemize}

We now introduce the function spaces that will be used in the formulation of the $n$-th viscoelastic problem. For every $n \in \mathbb{N}$ and $t\in [0,T]$ let $V^n$, $V^n_t$, and $V^{n,D}_t$ be defined as $V$, $V_t$, and $V^{D}_t$ (see \eqref{eq:VeH} and \eqref{eq:def-spazi}) with $\Gamma$ and $\Gamma_t$ replaced by $\Gamma^n$ and $\Gamma^n_t$. Let $\mathcal{V}^n$, $\mathcal{V}^{n,D}$, and $\mathcal{V}^{n,\infty}$ be defined as $\mathcal{V}$, $\mathcal{V}^{D}$, and $\mathcal{V}^{\infty}$ (see \eqref{eq:Vcorsivo}, \eqref{eq:VcorsivoD}, and \eqref{eq:defVinfty}) with $V_t$ and $V^D_t$ replaced by $V^n_t$ and $V^{n,D}_t$.

For every $n\in\mathbb{N}$ we fix 
\begin{eqnarray}\label{eq:dati-pn}
     & u^{0,n}\in V^n_0, \quad u^{1,n}\in H, \quad u^n_D\in H^2(0,T;\,H) \cap H^1(0,T;\,V^n_0),  \\
     & f^n\in \Ls(0,T;H), \quad F^n\in H^1(0,T;\tilde{H}),
\end{eqnarray}
and we suppose that $u^{0,n}$ and $u^n_D$ satisfy the compatibility condition
\begin{equation}\label{eq:comp-conditt-pn}
    u^{0,n}-u^n_D(0)\in V^{n,D}_0.
\end{equation}

Now we give the detailed regularity and convergence hypotheses on the data. First of all we assume that there exists a constant $K>0$ such that for every $n\in \mathbb{N}$ the following Korn inequality is satisfied:
\begin{equation}\label{eq:korn-unif}
\|Dv\|^2 \leq K ( \|v\|^2 + \|Ev\|^2 ) \quad \textnormal{ for every $v \in H^1(\Omega\setminus\Gamma^n;\Rd)$.}
\end{equation}
We set $\underline{H}= L^2(\Omega, \Rdd)$.
Concernig the convergence of our data we assume that
\begin{equation}\label{eq:conv-diff}
    \| \Phi^n - \Phi \|_{C^2} \to 0, \qquad \| \Psi^n - \Psi \|_{C^2} \to 0,
\end{equation}
\begin{equation}\label{eq:conv-Vn}
\| \C^n - \C \|_{C^1} \to 0, \qquad
   \| \V^n - \V \|_{C^1} \to 0
\end{equation}
\begin{equation}\label{eq.convDir}
   \| u^n_D - u_D \|_{H^2(0,T;\,H)} \to 0, \qquad  \| Du^n_D - Du_D \|_{H^1(0,T;\underline{H})} \to 0 ,
\end{equation}
\begin{equation}\label{eq:conf-forz}
    \|f^n - f\|_{\Ls(0,T;H)}\to 0,
\qquad
    \|F^n - F\|_{ H^1(0,T;\tilde{H})}\to 0,
\end{equation}
\begin{equation}\label{eq.convu0u1}
    \| u^{0,n} - u^0 \| \to 0, \qquad \| Du^{0,n} - Du^0 \| \to 0, \qquad
    \| u^{1,n} - u^1 \| \to 0,
\end{equation}
\begin{equation}\label{hpboooo}
   \|\Theta^n- Id\|_{C^2}\to 0, \qquad \|\Xi^n- Id\|_{C^2}\to 0.
\end{equation}
It follows from (H19)-(H21) and \eqref{hpboooo} that
\begin{equation}\label{eq:bound-det}
   m_{det}(\Psi^n)\to m_{det}(\Psi)  \quad \text{and} \quad
   M_{det}(\Psi^n) \to M_{det}(\Psi^n) \quad \text{as } n\to \infty.
\end{equation}

For every $n\in\mathbb{N}$ we consider the solution $u^n$ of the problem
\begin{align}
    &\label{eq:weak_1-n} \,\,\,u^n\in\mathcal{V}^n \quad \text{and} \quad u^n-u^n_D\in \mathcal{V}^{n,D},\\
    &-\int^T_0(\dot u^n(t),\dot \varphi(t))\,\dt+ \int^T_0 ((\C^n+ \V^n) Eu^n(t),E\varphi(t)) \,\dt \vspace{1cm}\nonumber \\
    & - \int^T_0\int^t_0  \textnormal{e}^{\tau-t} (\V^n Eu^n(\tau), E\varphi(t))\,\dtau\dt= \int^T_0 (f^n(t),\varphi(t))\,\dt{} \nonumber \\
    &\label{eq:weak_2-n}+ \int^T_0  (F^n(t),E\varphi(t))  \,\dt{}  \quad\text{for all }  \varphi\in \mathcal{V}^{n,D} \text{ with } \varphi(0)=\varphi(T)=0,\\
    & \label{eq:weak_3-n}\,\,\,u^n(0)=u^{0,n} \quad \text{in }  H \quad  \text{and} \quad \dot u^n(0)=u^{1,n} \quad \text{in } (V^{D,n}_0)^*.
\end{align}
We also consider the solution $v^n$ of the problem
\begin{align}
    &\label{eq:onde_1-n} \,\,\,v^n\in\mathcal{V}^n \quad \text{and} \quad v^n-u^n_D\in \mathcal{V}^{n,D},\\
    &-\int^T_0(\dot v^n(t),\dot \varphi(t))\,\dt+ \int^T_0 (\A^n Ev^n(t),E\varphi(t)) \,\dt = \int^T_0 (f^n(t),\varphi(t))\,\dt{} \vspace{1cm}\nonumber \\
    &\label{eq:onde_2-n} + \int^T_0  (F^n(t),E\varphi(t))  \,\dt{}  \quad\text{for all }  \varphi\in \mathcal{V}^{n,D} \text{ with } \varphi(0)=\varphi(T)=0,\\
    & \label{eq:onde_3-n}\,\,\,v^n(0)=u^{0,n} \quad \text{in }  H \quad  \text{and} \quad \dot v^n(0)=u^{1,n} \quad \text{in } (V^{D,n}_0)^*.
\end{align}
The notion of convergence for $u^n$ as $n\to \infty$ can't be given directly because they don't belong to the same space. To overcome this problem we need to embed $V^n$ into a common space. This will be done using the standard embedding $V^n \hookrightarrow H \times \underline{H}$ given by $v  \mapsto (v,\,Dv) $, where the distrubutional gradient $Dv$ on $\Omega\setminus \Gamma^n$ is regarded as a function defined a.e. on $\Omega$, which belongs to $\underline{H}$.

We are now in a position to state one the main result of this section.
\begin{theorem}\label{thm:dip-cont-main}
Assume (H1)-(H23), \eqref{eq:forzante-f}-\eqref{eq:comp-conditt}, and \eqref{eq:dati-pn}-\eqref{hpboooo}. Let $u$ be the solution of \eqref{eq:weak_generalizzata1}-\eqref{eq:weak_generalizzata3} and let (for every $n\in \mathbb{N}$) $u^n$ be the solution of \eqref{eq:weak_1-n}-\eqref{eq:weak_3-n}. Then
\begin{equation*}
    (   u^n(t),\, D u^n(t), \, \dot{ u}^n(t) ) \to (  u(t),\, D u(t), \, \dot{ u}(t) ) \quad \text{ in } H\times \underline{H} \times H
 \end{equation*}
 for every $t \in [0,T]$. Moreover there exists a constant $C>0$ such that
\begin{equation*}
    \| u^n(t) \| + \| D u^n(t)\| + \| \dot{ u}^n(t) \| \leq C
\end{equation*}
for every $n\in \mathbb{N}$ and $t \in [0,T]$.
\end{theorem}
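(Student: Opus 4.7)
The plan is to follow the strategy sketched in the introduction: realize each $u^n$ as the fixed point of a contraction on a short time interval, transfer the known continuous dependence for the memory-free elastodynamics problem through a general convergence-of-fixed-points lemma (Lemma~\ref{lemma:converg-punti-fissi}), and then bootstrap from short subintervals to the full interval $[0,T]$.

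First, for each $n$ I would set up a fixed-point formulation of \eqref{eq:weak_1-n}--\eqref{eq:weak_3-n}: given $w\in\mathcal{V}^{n,\infty}$, define $S^n(w)$ to be the unique solution (provided by Theorem~\ref{thm:ex-uniq-onde}) of the memory-free problem \eqref{eq:onde_1-n}--\eqref{eq:onde_3-n} obtained by moving the memory term to the right-hand side, i.e.\ with $F^n$ replaced by $F^n-\mathcal{L}^n w$. Solutions of \eqref{eq:weak_1-n}--\eqref{eq:weak_3-n} are exactly fixed points of $S^n$. Applying Corollary~\ref{cor:stima-ris-datinulli} to the difference $S^n(w_1)-S^n(w_2)$ (which solves a memory-free problem with vanishing data and $F$ replaced by $-\mathcal{L}^n(w_1-w_2)$) gives
\[
\|S^n(w_1)-S^n(w_2)\|_{\mathcal{V}^{n,\infty}}\leq B(T+T^3)\,\|\V^n\|_\infty\,\|w_1-w_2\|_{\mathcal{V}^{n,\infty}}.
\]
Since $\|\V^n\|_\infty\leq M_0$ uniformly in $n$ by (H16), I can pick $\delta\in(0,T]$ with $B(\delta+\delta^3)M_0\leq 1/2$, so that on $[0,\delta]$ every $S^n$ is a strict contraction with Lipschitz constant at most $1/2$, uniformly in $n$; the fixed point $u^n|_{[0,\delta]}$ inherits a corresponding uniform bound via Proposition~\ref{prop:energ-soloF}.

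On $[0,\delta]$ I would then apply Lemma~\ref{lemma:converg-punti-fissi} after embedding each $V^n$ into the common space $H\times\underline{H}$ via $v\mapsto(v,Dv)$, as described just before the statement. The two hypotheses to check are the uniform contraction ratio (already proved) and the pointwise convergence $S^n(w^n)\to S(w)$ for suitably convergent $w^n$. The latter splits into the convergence of the memory operator $\mathcal{L}^n w^n \to \mathcal{L} w$ in $H^1(0,\delta;\tilde H)$, which follows from \eqref{eq:conv-Vn} and the definition of $\mathcal{L}^n$, combined with the continuous dependence of the memory-free elastodynamics problem on the cracks, coefficients, loads, and initial data, which is exactly what is proved in \cite{DalMaso-Luc} and \cite{Caponi} under hypotheses (H16)--(H23) and \eqref{eq:conv-diff}--\eqref{hpboooo}. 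This yields $u^n(t)\to u(t)$ in $H\times\underline{H}$ and $\dot u^n(t)\to\dot u(t)$ in $H$ for every $t\in[0,\delta]$, with the required uniform bound.

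To cover the full interval I would iterate on $[k\delta,(k+1)\delta]$ for $k=1,2,\dots$. On $[k\delta,T]$ the viscoelastic equation rewrites as
\[
\ddot u^n-\dive(\A^n Eu^n)=f^n-\dive F^n+\dive\!\!\int_{k\delta}^t\!\! e^{\tau-t}\V^n Eu^n(\tau)\,\dtau+\dive G^n_k,
\]
with past-memory forcing
\[
G^n_k(t):=\int_0^{k\delta}e^{\tau-t}\,\V^n Eu^n(\tau)\,\dtau
\]
determined by $u^n|_{[0,k\delta]}$. The inductive step is then an instance of the short-interval analysis applied on $[k\delta,(k+1)\delta]$ with updated Cauchy data $(u^n(k\delta),\dot u^n(k\delta))$ and with $F^n$ replaced by $F^n+G^n_k$. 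Finitely many iterations exhaust $[0,T]$, and the uniform bound on $\|u^n(t)\|+\|Du^n(t)\|+\|\dot u^n(t)\|$ is transferred from interval to interval by the energy estimate of Proposition~\ref{prop:energ-soloF}. The main technical obstacle will be the inductive step: one has to verify that $G^n_k\to G_k$ in $H^1(k\delta,T;\tilde H)$ and that the Cauchy data at $t=k\delta$ converge in a way compatible with the embeddings into the common space, which requires combining the uniform $\mathcal{V}^{n,\infty}$-bound with the pointwise convergence obtained on $[0,k\delta]$ and with the convergence of $\V^n$ from \eqref{eq:conv-Vn}.
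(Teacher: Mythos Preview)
Your overall strategy---fixed-point formulation, uniform contraction constant, Lemma~\ref{lemma:converg-punti-fissi}, and iteration on subintervals---matches the paper's. However, two substantial parts of the argument are missing or incorrect.

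First, Lemma~\ref{lemma:converg-punti-fissi} requires all the maps to act on a \emph{single} complete metric space. Your $S^n$ act on the spaces $\mathcal{V}^{n,\infty}$, which vary with $n$; merely embedding $V^n$ into $H\times\underline{H}$ does not fix this, since $S^n$ must then be defined on the whole common space, not just on the image of $\mathcal{V}^{n,\infty}$. The paper resolves this by working in $\mathcal{W}=L^2((0,T);H\times\underline{H}\times H)$ and defining the memory operator $\mathcal{T}^n$ through the extensions $\V^n_e$ of \eqref{eq:tensori-estesi}, which act on arbitrary matrix fields in $\underline{H}$; the resulting map $\mathcal{G}^n$ takes any $w\in\mathcal{W}$ to the triple $(z^n,Dz^n,\dot z^n)$, where $z^n$ solves the memory-free problem with forcing $F^n+\mathcal{T}^n w$. (Incidentally, the hypothesis of Lemma~\ref{lemma:converg-punti-fissi} is $G_n(y)\to G(y)$ for every \emph{fixed} $y$, not for converging sequences $y_n$.)

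Second, and more seriously, you assert that the convergence of the memory-free solutions ``is exactly what is proved in \cite{DalMaso-Luc} and \cite{Caponi}''. The paper explicitly notes that those results are stated under the extra assumptions (a) $\Gamma^n_0=\Gamma_0$ and (b) forcing in $L^2(0,T;H)$, neither of which holds here, and devotes essentially half of Section~4 to removing them. For (a), one composes with the diffeomorphisms $\Theta^n,\Xi^n$ of (H19)--(H23) to reduce to a common initial crack $\Gamma_0$, and then checks that the transformed tensor $\hat{\A}^n$ (which no longer satisfies $\hat{\A}^n[A]=\hat{\A}^n[A_{sym}]$) still meets the coercivity condition \eqref{eq:bound-su-hatBn} required by \cite{Caponi-tesi}; this is where (H15), (H18), and \eqref{eq:bound-det} are actually used. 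For (b), one approximates $F$ by $G_\varepsilon$ with $G_\varepsilon(t)\in C^\infty_c(\Omega\setminus K_\varepsilon;\R^{d\times d}_{sym})$ for a neighbourhood $K_\varepsilon$ of $\Gamma\cap\overline\Omega$ (Lemma~\ref{lemma:approx-forzante}), so that the integration by parts \eqref{eq:int-parti} is valid in every $V^n$ for large $n$ and the forcing becomes $g_\varepsilon=f-\dive G_\varepsilon\in L^2(0,T;H)$; the error $v^n-v^n_\varepsilon$ is controlled uniformly in $n$ via Proposition~\ref{prop:energ-soloF}. Without these two steps the appeal to the cited references does not go through.
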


The proof is based on the following lemma.

\begin{lemma}\label{lemma:converg-punti-fissi}
 Let $X$ a complete metric space, let $G_n,\,G\colon X \to X$ with $n\in\mathbb{N}$ be maps with same contraction constant $\lambda \in (0,1)$, and let $x_n,\,x$ be the corresponding fixed points. Suppose that $G_n(y) \to G(y)$ for every $y\in X$. Then $x_n\to x$.
\end{lemma}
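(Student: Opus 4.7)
The plan is to exploit the contraction property together with the triangle inequality at a single, well-chosen point, namely the limit fixed point $x$. The key algebraic identity is $x_n = G_n(x_n)$ and $x = G(x)$, which lets us write
\begin{equation*}
d(x_n, x) = d(G_n(x_n), G(x)) \leq d(G_n(x_n), G_n(x)) + d(G_n(x), G(x)),
\end{equation*}
where the inequality comes from inserting the auxiliary point $G_n(x)$ via the triangle inequality.

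Next I would bound each term separately. For the first summand, the fact that $G_n$ has contraction constant $\lambda$ gives $d(G_n(x_n), G_n(x)) \leq \lambda\, d(x_n, x)$. For the second summand, the hypothesis that $G_n(y) \to G(y)$ for every $y \in X$, applied with $y = x$, yields $d(G_n(x), G(x)) \to 0$ as $n \to \infty$. Combining these bounds and rearranging gives
\begin{equation*}
(1-\lambda)\, d(x_n, x) \leq d(G_n(x), G(x)),
\end{equation*}
and since $\lambda < 1$, dividing by $1 - \lambda$ and passing to the limit yields $d(x_n, x) \to 0$.

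I do not expect any real obstacle here: the only subtle point is noticing that one should apply the pointwise convergence hypothesis at the limit fixed point $x$ rather than at $x_n$ (which would be illegitimate, since the convergence is assumed only pointwise, not uniformly). Completeness of $X$ is used implicitly only to guarantee the existence of the fixed points $x_n$ and $x$ via the Banach contraction principle; it plays no further role in the convergence argument itself.
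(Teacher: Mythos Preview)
Your proof is correct and follows exactly the same approach as the paper's own proof: insert $G_n(x)$ via the triangle inequality, use the common contraction constant $\lambda$, and apply the pointwise convergence hypothesis at $y=x$ to conclude $(1-\lambda)\,d(x_n,x)\to 0$. Your additional remarks on why one evaluates at $x$ rather than $x_n$, and on the role of completeness, are accurate and go slightly beyond what the paper writes.
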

\begin{proof}
We have
$d(x_n,x)=$ $d(G_n(x_n),G(x))\leq d(G_n(x_n),G_n(x)) + d(G_n(x),G(x)) \break \leq \lambda d(x_n,x) + d(G_n(x),G(x)) $, hence
$(1 - \lambda)d(x_n,x) \leq  d(G_n(x),G(x))  \to 0 $, as $n\to + \infty$.
\end{proof}
In order to apply the previous lemma we will identify $u_n$ and $u$ with the fixed points of suitable operators defined in the Banach space
\begin{equation}\label{eq:Wpspace}
    \mathcal{W}:=L^2((0,T);H\times \underline{H} \times H),
\end{equation}
where on $H\times \underline{H} \times H$ we consider the Hilbert product norm defined by
\begin{equation}
    \|(h_1,h_2,h_3)\|_{H\times \underline{H} \times H}:= \Big( \|h_1\|^2 + \|h_2\|^2 +\|h_3\|^2\Big)^{1/2}
\end{equation}
for every $(h_1,h_2,h_3) \in H\times \underline{H} \times H$. In order to define the sequence of maps whose fixed points are $(u^n,Du^n,\dot u^n)$ and $(u,Du,\dot u)$, we consider the linear operators
\begin{equation}
    \mathcal{T}^n:\mathcal{W} \longrightarrow H^1(0,T;\tilde{H}) \quad \text{ and } \quad
    \mathcal{T}:\mathcal{W} \longrightarrow H^1(0,T;\tilde{H})
\end{equation}
defined as
\begin{equation}
    (\mathcal{T}^nw)(t):=\int^t_0  \textnormal{e}^{\tau-t} \V^n_e w_2(\tau) \,\dtau \quad \text{ and } \quad
    (\mathcal{T}w)(t):=\int^t_0  \textnormal{e}^{\tau-t} \V_e w_2(\tau) \,\dtau,
\end{equation}
where $w(t)=(w_1(t),w_2(t),w_3(t))$ and $\V^n_e,\,\V_e$ are as in \eqref{eq:tensori-estesi} and \eqref{eq:tensori-estesi-limite}. Arguing as in \eqref{eq:Lu-inf} and \eqref{eq:dotLu-inf} we get that
 \begin{equation}\label{eq:Tau-new}
    \|\mathcal{T}w\|_{L^\infty(0,T;\tilde{H})} \leq T^{{1}/{2}} \|\V\|_{\infty} \| w \|_{\mathcal{W}},
    \end{equation}
    \begin{equation}\label{eq:dotTau-new}
     \|\dot{\wideparen{\mathcal{T}w}}\|_{L^2(0,T;\tilde{H})}\leq (1+T)\|\V\|_{\infty} \| w \|_{\mathcal{W}},
\end{equation}
and the same estimate holds for $\mathcal{T}^nw$ with $\V$ replaced by $\V^n$.

Let $\mathcal{G}: \mathcal{W} \to \mathcal{W}$ be the operator defined for every $w\in \mathcal{W}$ by
\begin{equation}
    \mathcal{G}(w)=(z,Dz,\dot z),
\end{equation}
where $z$ is the solution of problem \eqref{eq:onde1}-\eqref{eq:onde3} with $F$ replaced by $F+ \mathcal{T}w$. From the definition of $\mathcal{G}$ it follows that $(u,Du,\dot u)$ is a fixed point of map $\mathcal{G}$ if and only if $u$ is the solution of the problem considered in Theorem \ref{thm:dip-cont-main}.

Similarly, let $\mathcal{G}^n: \mathcal{W} \to \mathcal{W}$ be the operator defined for every $w\in \mathcal{W}$ by
\begin{equation}
    \mathcal{G}^n(w)=(z^n,Dz^n,\dot z^n),
\end{equation}
where $z^n$ is the solution of problem \eqref{eq:onde_1-n}-\eqref{eq:onde_3-n} with $F$ replaced by $F^n+\mathcal{T}^nw$. From the definition of $\mathcal{G}^n$ it follows that $u^n$ is the solution of problem \eqref{eq:weak_1-n}-\eqref{eq:weak_3-n} if and only if $(u^n,Du^n,\dot u^n)$ is a fixed point of map $\mathcal{G}^n$.

The following lemma provides a uniform Lipschitz estimate for the operators $\mathcal{G}^n$.

\begin{proposition}\label{lemma:equicontratt} There exist a positive constants $B$, independent of $n$ and $T$, such that
\begin{equation}\label{eq:equicontrGn-new}
        \|\mathcal{G}^n(w_1)-\mathcal{G}^n(w_2)\|_{\mathcal{W}}\leq B (T + T^3) \| w_1-w_2 \|_{\mathcal{W}},
\end{equation}
for every $w_1,\,w_2\in \mathcal{W}.$
\end{proposition}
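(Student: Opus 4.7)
The plan is to exploit the linearity of the elastodynamics problem in order to reduce the estimate to the energy bound of Proposition~\ref{prop:energ-soloF}. Writing $z^n_i$ for the first component of $\mathcal{G}^n(w_i)$, the difference $u := z^n_1 - z^n_2$ is, by linearity (and unique, by Theorem~\ref{thm:ex-uniq-onde} applied with $\C^n,\V^n$ and $\Gamma^n_t$), the solution of the elastodynamics problem \eqref{eq:onde_1-n}--\eqref{eq:onde_3-n} with zero initial data, zero Dirichlet datum, $f^n=0$, and $F^n$ replaced by $\mathcal{T}^n(w_1-w_2)$. Hence $\mathcal{G}^n(w_1) - \mathcal{G}^n(w_2) = (u, Du, \dot u)$.

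Next I apply the $n$-analogue of Proposition~\ref{prop:energ-soloF}. Since the coercivity constant $\alpha_0$ is uniform in $n$ by (H16) and the Korn constant $K$ is uniform in $n$ by \eqref{eq:korn-unif}, the constant $A$ provided by that proposition can be chosen independently of $n$. With $u^{1,n}=0$ and $f^n=0$ this yields
\begin{equation*}
\|u\|_{\mathcal{V}^{n,\infty}} \leq A(1+T)\Bigl(\|\mathcal{T}^n(w_1-w_2)\|_{L^\infty(0,T;\tilde H)} + T^{1/2}\,\|\dot{\wideparen{\mathcal{T}^n(w_1-w_2)}}\|_{L^2(0,T;\tilde H)}\Bigr).
\end{equation*}
Invoking the $n$-versions of \eqref{eq:Tau-new} and \eqref{eq:dotTau-new}, together with the uniform bound $\|\V^n\|_\infty \leq M_0$ from (H16), this simplifies to
\begin{equation*}
\|u\|_{\mathcal{V}^{n,\infty}} \leq A\,M_0\,(1+T)\bigl(T^{1/2} + T^{1/2}(1+T)\bigr)\|w_1-w_2\|_{\mathcal{W}}.
\end{equation*}

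The final step is to pass from the $\mathcal{V}^{n,\infty}$ norm to the $\mathcal{W}$ norm. By the definition of the product norm on $H\times\underline{H}\times H$ and of $\|\cdot\|_{\mathcal{V}^{n,\infty}}$, one has $\|u(t)\|^2 + \|Du(t)\|^2 + \|\dot u(t)\|^2 \leq 2\|u\|_{\mathcal{V}^{n,\infty}}^2$ for every $t\in[0,T]$, so that integration in $t$ gives $\|(u,Du,\dot u)\|_{\mathcal{W}} \leq (2T)^{1/2}\|u\|_{\mathcal{V}^{n,\infty}}$. Combining the estimates yields an overall bound of order $T(1+T)(2+T) = 2T + 3T^2 + T^3$. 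Using the elementary inequality $T^2 \leq T + T^3$ (valid for all $T>0$, since $1 - T + T^2 > 0$), this quantity is dominated by a universal multiple of $T + T^3$, which gives \eqref{eq:equicontrGn-new} with a constant $B$ depending only on $A$, $M_0$, and $K$, hence independent of $n$ and $T$.

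There is no substantive obstacle; the proof is essentially the concatenation of the elastodynamics energy estimate with the boundedness estimates for $\mathcal{T}^n$. The only point requiring care is checking that every constant involved (the Korn constant, the coercivity constant, the bound on $\V^n$, and hence the constant $A$ of Proposition~\ref{prop:energ-soloF}) can be chosen uniformly in $n$, which is guaranteed by (H16) and \eqref{eq:korn-unif}.
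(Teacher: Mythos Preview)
Your proof is correct and follows essentially the same approach as the paper: both reduce to the homogeneous elastodynamics problem with forcing $\mathcal{T}^n(w_1-w_2)$, invoke the energy bound of Proposition~\ref{prop:energ-soloF} (with constants uniform in $n$), plug in the estimates \eqref{eq:Tau-new}--\eqref{eq:dotTau-new} for $\mathcal{T}^n$, and finally pass from the $\mathcal{V}^{n,\infty}$ norm to the $\mathcal{W}$ norm picking up a factor $T^{1/2}$. The only cosmetic difference is that the paper bounds $\|\V^n\|_\infty$ via the convergence \eqref{eq:conv-Vn}, whereas you use the uniform bound $M_0$ from (H16); both work.
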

\begin{proof}
Let us fix $w_1,\,w_2\in \mathcal{W}$ and set $w:=w_1-w_2$. We observe that $\mathcal{G}^n(w_1)-\mathcal{G}^n(w_2)=(z^n,Dz^n,\dot z^n)$ where $z^n$ is the solution of problem \eqref{eq:weak_1-n}-\eqref{eq:weak_3-n} with $F^n$ replaced by $\mathcal{T}^nw$ and $u^n_D$, $f^n$, $u^{0,n}$, $u^{1.n}$ replaced by zero. From Theorem \ref{prop:energ-soloF} and from the uniform bound of the data there exists a positive constants $A$, independent of $n$ and $T$, such that
\begin{equation}
        \|z^n\|_{\mathcal{V}^\infty}\leq A(1+T)\|\mathcal{T}^nw \|_{L^\infty(0,T;\tilde{H})}+ A(T^{1/2}+ T^{3/2})\|{\dot{\wideparen{\vphantom{\mathcal{T}}\smash{\mathcal{T}^nw}}}}\|_{L^2(0,T;\tilde{H})}.
    \end{equation}
    Using \eqref{eq:Tau-new} and \eqref{eq:dotTau-new} we get
    \begin{equation}
        \|(z^n,Dz^n,\dot z^n)\|_{\mathcal{W}}\leq A\Big((1+T) T + (T^{1/2}+T^{3/2})^2\Big)\|\V^n\|_{\infty} \| w \|_{\mathcal{W}}
    \end{equation}
    which gives \eqref{eq:equicontrGn-new} taking into account \eqref{eq:conv-Vn}.
\end{proof}

To apply Lemma \ref{lemma:converg-punti-fissi} we have to prove that
\begin{equation*}
\mathcal{G}^n(w) \to \mathcal{G}(w) \quad \textnormal{in } \mathcal{W},
\end{equation*}
for every $w\in \mathcal{W}$. In order to prove this we will use the results for the wave equation developed in \cite{Caponi-tesi}. 
Unfortunately these results can not be applied directly because they are obtained under the assumptions:
\begin{itemize}
    \item[(a)] $\Gamma^n_0=\Gamma_0$ for all $n\in\mathbb{N}$,
    \item[(b)] the forcing terms belong to $L^2(0,T; H)$.
\end{itemize}
To overcome the difficulties due to (a) we need some preliminary results. The first one is an uniform bound of the solution of problems \eqref{eq:onde_1-n}-\eqref{eq:onde_3-n}.

\begin{proposition}\label{prop:equibdd-vn}
Assume (H1)-(H23), \eqref{eq:dati-pn}-\eqref{eq.convDir}, and \eqref{eq.convu0u1}-\eqref{hpboooo}. Let let $v^n$ be the solution of \eqref{eq:onde_1-n}-\eqref{eq:onde_3-n}. Then the there exists a positive constant $C$ such that
\begin{equation}\label{eq:equibdd-vn}
    \|v^n\|_{\mathcal{V}^{n,\infty}} \leq C \quad \text{for every } n\in \mathbb{N}.
\end{equation}
\end{proposition}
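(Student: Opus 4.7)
The plan is to reduce to a problem with zero Dirichlet data and then apply the energy inequality \eqref{eq:energy-bal-wave}. Let $w^n := v^n - u^n_D \in \mathcal{V}^{n,D}$. Choosing test functions $\varphi \in \mathcal{V}^{n,D}$ with $\varphi(0)=\varphi(T)=0$ and integrating by parts the term $\int_0^T(\dot u^n_D,\dot\varphi)\,\dt$ (which is legitimate because $u^n_D\in H^2(0,T;H)$), one checks that $w^n$ is the weak solution of an elastodynamics problem on $\Omega\setminus\Gamma^n_t$ with zero Dirichlet datum, elasticity tensor $\mathbb{A}^n$, forcing term
\[
\tilde f^n := f^n - \ddot u^n_D, \qquad \tilde F^n := F^n - \mathbb{A}^n E u^n_D,
\]
and initial data $w^n(0)=u^{0,n}-u^n_D(0)\in V^{n,D}_0$, $\dot w^n(0)=u^{1,n}-\dot u^n_D(0)\in H$.

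The energy inequality \eqref{eq:energy-bal-wave} applied to $w^n$ gives, for every $t\in[0,T]$,
\[
\tfrac12\|\dot w^n(t)\|^2 + \tfrac12(\mathbb{A}^n E w^n(t),Ew^n(t)) \leq \mathcal{E}_{w^n}(0) + \mathcal{W}_{w^n}(t),
\]
where
\begin{align*}
\mathcal{W}_{w^n}(t) &= \int_0^t(\tilde f^n(s),\dot w^n(s))\,\ds - \int_0^t(\dot{\tilde F}^n(s),Ew^n(s))\,\ds \\
& \quad + (\tilde F^n(t),Ew^n(t)) - (\tilde F^n(0),Ew^n(0)).
\end{align*}
I would then use the lower bound $(\mathbb{A}^n A:A)\geq \alpha_0|A|^2$ from (H3) and (H6), the upper bound $|\mathbb{A}^n|\leq 2M_0$ from (H1) and (H4), and Young's inequality $ab\leq \varepsilon a^2 + \frac{1}{4\varepsilon}b^2$ with a small $\varepsilon$ to absorb the terms containing $\|Ew^n(t)\|$ and $\|\dot w^n(s)\|$ on the right-hand side into the corresponding terms on the left. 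The remaining terms involve only the data norms
\[
\|f^n\|_{L^2(0,T;H)},\ \|\ddot u^n_D\|_{L^2(0,T;H)},\ \|F^n\|_{H^1(0,T;\tilde H)},\ \|Du^n_D\|_{H^1(0,T;\underline H)},\ \|u^{0,n}\|,\ \|Du^{0,n}\|,\ \|u^{1,n}\|,
\]
each of which is uniformly bounded in $n$ by the convergences \eqref{eq.convDir}, \eqref{eq:conf-forz}, \eqref{eq.convu0u1}, together with the uniform bound on $\|\mathbb{A}^n\|_{C^1}$ coming from (H16). After absorbing and taking the supremum over $t\in[0,T]$, I obtain a uniform bound
\[
\sup_{t\in[0,T]}\bigl(\|\dot w^n(t)\|^2 + \|E w^n(t)\|^2\bigr) \leq C_1,
\]
where $C_1$ is independent of $n$ (and depends on $T$, $M_0$, $\alpha_0$, and the data norms). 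Since $w^n(0)=u^{0,n}-u^n_D(0)$ is uniformly bounded in $H$, integrating $\dot w^n$ in time gives a uniform $L^\infty(0,T;H)$ bound on $w^n$; combining with the previous display and the uniform Korn inequality \eqref{eq:korn-unif} yields $\sup_n\|w^n\|_{\mathcal{V}^{n,\infty}}<+\infty$. Since $\|u^n_D\|_{\mathcal{V}^{n,\infty}}$ is also uniformly bounded (by \eqref{eq:dati-pn} and \eqref{eq.convDir}), we conclude $\sup_n\|v^n\|_{\mathcal{V}^{n,\infty}}\leq C$, which is \eqref{eq:equibdd-vn}.

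The only non-routine point is the fact that one has to use the uniform Korn inequality \eqref{eq:korn-unif} (which is why hypothesis \eqref{eq:korn-unif} was imposed on the sequence $\Gamma^n$): without it, the Korn constant could blow up along the sequence and we would lose control of $\|Dw^n(t)\|$ in terms of $\|Ew^n(t)\|$ and $\|w^n(t)\|$. The rest of the argument is a standard Gronwall/Young energy estimate carried out uniformly in $n$.
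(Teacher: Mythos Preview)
Your proposal is correct and follows essentially the same energy-method approach as the paper. The only difference is packaging: the paper subtracts not just $u^n_D(t)$ but also the time-independent term $u^{0,n}-u^n_D(0)$, so that the auxiliary function $v^n_0$ has zero initial displacement as well as zero Dirichlet datum, which allows a direct citation of Proposition~\ref{prop:energ-soloF} (whose hypotheses include $u^0=0$) instead of re-running the energy inequality~\eqref{eq:energy-bal-wave} by hand as you do.
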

\begin{proof}
We note that $v^n_0(t):=v^n(t)-u^{0,n}+u^n_D(0)-u^n_D(t)$ is the solution of \eqref{eq:onde_1-n}-\eqref{eq:onde_3-n} with $u^0$ replaced by $0$, $u^{1,n}$ replaced by $u^{1,n}-\dot u^n_D(0)$, $u^n_D$ replaced by $0$, $f^n$ replaced by $f^n-\ddot u^n_D$, and $F^n$ replaced by $F^n-\A^nEu^n_D-\A^nE(u^{n,0}-u^n_D(0))$. Then we can apply Proposition \ref{prop:energ-soloF} and \eqref{eq:conv-diff}-\eqref{hpboooo} to obtain that $\|v^n_0\|_{\mathcal{V}^n}$ is equibounded. By \eqref{eq.convDir} and \eqref{eq.convu0u1} we get \eqref{eq:equibdd-vn}.
\end{proof}

The next proposition deals with the case of solution of \eqref{eq:onde_1-n}-\eqref{eq:onde_3-n} when $F^n$ is replaced by $0$. 

\begin{proposition}\label{lemma:convergenza(g,0)}
Assume (H1)-(H23), \eqref{eq:dati-pn}-\eqref{eq.convDir}, and \eqref{eq.convu0u1}-\eqref{hpboooo}. Given $g\in L^2(0,T;\,H)$, let $v^n$ be the solution of \eqref{eq:onde_1-n}-\eqref{eq:onde_3-n} with $f^n$ replaced by $g$ and $F^n$ replaced by $0$. Let $v$ be the solution in \eqref{eq:onde1}-\eqref{eq:onde3} with $f$ replaced by $g$ and $F$ replaced by $0$.
Then for every $t \in [0,T]$ we have 
\begin{equation}\label{lemma:convergenza(g,0)-p}
    (   v^n(t),\, D v^n(t), \, \dot{ v}^n(t) ) \to (  v(t),\, D v(t), \, \dot{ v}(t) ) \quad \text{ in } H\times \underline{H} \times H.
 \end{equation}
\end{proposition}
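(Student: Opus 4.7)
The plan is to reduce the proposition to the continuous-dependence result for pure elastodynamics proved in \cite{Caponi-tesi}, which requires the stronger assumptions (a) $\Gamma^n_0=\Gamma_0$ for every $n$, and (b) that the forcing term lies in $L^2(0,T;H)$. Assumption (b) is already in place, since $F^n$ has been replaced by $0$ and $g\in L^2(0,T;H)$; the task is therefore to rearrange the problem so that (a) holds as well. The key tool for this is the sequence of diffeomorphisms $\Theta^n,\Xi^n$ provided by (H19)--(H23), which by \eqref{hpboooo} converge to the identity in $C^2$ and satisfy $\Theta^n(\Gamma_0)=\Gamma^n_0$ and $\Theta^n(\Gamma\cap\overline\Omega)=\Gamma^n\cap\overline\Omega$.

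Concretely, I would introduce the pullback $\tilde v^n(t,y):=v^n(t,\Theta^n(y))$ and rewrite the weak formulation \eqref{eq:onde_1-n}--\eqref{eq:onde_3-n} in the variable $y$ using test functions of the form $\varphi\circ\Xi^n$. By standard chain-rule computations this produces a problem of the same form on the fixed manifold $\Omega\setminus\Gamma$, with new time-dependent cracks $\widetilde\Gamma^n_t:=\Xi^n(\Gamma^n_t)\subset\Gamma$, satisfying $\widetilde\Gamma^n_0=\Gamma_0$ by (H22); new parametrizing maps $\widetilde\Phi^n(t,\cdot):=\Xi^n\circ\Phi^n(t,\cdot)\circ\Theta^n$ (still fulfilling (H11)--(H15) with constants uniform in $n$, thanks to (H18) and \eqref{hpboooo}); uniformly coercive transformed tensors $\widetilde{\A}^n$; and transformed data $\tilde u^{0,n}$, $\tilde u^{1,n}$, $\tilde u^n_D$, $\tilde g^n\in L^2(0,T;H)$. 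Combining \eqref{eq:conv-diff}, \eqref{eq:conv-Vn}, \eqref{eq.convDir}, \eqref{eq.convu0u1}, \eqref{hpboooo} with standard change-of-variable estimates, each of these transformed objects would be shown to converge, in the norms required by \cite{Caponi-tesi}, to the corresponding datum of the limit problem solved by $v$ on $\Omega\setminus\Gamma$.

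With (a) and (b) both satisfied, the continuous-dependence theorem of \cite{Caponi-tesi} applies to the sequence $\tilde v^n$, yielding
$$(\tilde v^n(t),D\tilde v^n(t),\dot{\tilde v}^n(t))\to(v(t),Dv(t),\dot v(t))\quad\text{in } H\times\underline{H}\times H$$
for every $t\in[0,T]$. Finally, transforming back through $v^n(t,x)=\tilde v^n(t,\Xi^n(x))$, using the uniform bound on $v^n$ from Proposition \ref{prop:equibdd-vn} and the convergence $\Xi^n\to\mathrm{Id}$ in $C^2$, one recovers \eqref{lemma:convergenza(g,0)-p}. The principal obstacle is the bookkeeping in the middle step: verifying that hypotheses (H1)--(H15) are preserved under the conjugation by $\Theta^n$ with constants that are uniform in $n$ (notably (H15), where the velocity bound for $\widetilde\Phi^n$ must be controlled by the $C^2$-smallness of $\Theta^n-\mathrm{Id}$), and that the transformed data converge in precisely the norms hypothesized in \cite{Caponi-tesi}; once this is done, the remaining manipulations are routine applications of the chain rule and of the continuity of composition with $\Theta^n$ and $\Xi^n$.
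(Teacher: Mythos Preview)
Your overall strategy coincides with the paper's: pull back through $\Theta^n$ to force $\widetilde\Gamma^n_0=\Gamma_0$, apply the continuous-dependence theorem from \cite{Caponi-tesi}, then push forward via $\Xi^n\to\mathrm{Id}$. However, two points you label ``bookkeeping'' are genuine obstacles that the paper has to work around explicitly.

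First, the transformed problem is \emph{not} of the same form. After the change of variables the bilinear term becomes $(\widetilde{\A}^n D\tilde v^n,D\varphi)$ with $\widetilde{\A}^n(y)[A]=\A^n_e(\Theta^n(y))[A\,D\Xi^n(\Theta^n(y))](D\Xi^n(\Theta^n(y)))^T$, acting on the \emph{full} gradient; in general $\widetilde{\A}^n[A]\neq\widetilde{\A}^n[A_{sym}]$, so hypotheses (H1)--(H6) and the $Eu$-structure are lost, and one cannot simply re-verify (H1)--(H15). This is exactly why the paper cannot invoke \cite{Caponi} and must use the more general framework of \cite{Caponi-tesi}, which replaces (H3)/(H15) by a coercivity condition on the composite tensor $\widehat{\mathbb{B}}^n(t,y)$ built from $\widetilde{\A}^n$ and the time-dependent maps. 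Establishing that inequality uniformly in $n$ (the paper's \eqref{eq:bound-su-hatBn}) takes real work: one has to unwind the nested Jacobians, use Korn's inequality with the uniform constant $K$ from \eqref{eq:korn-unif}, and exploit (H15) together with \eqref{eq:conv-diff}, \eqref{hpboooo}, \eqref{eq:bound-det} to get a positive lower bound on the function $\omega^n(t,y)$ for $n$ large.

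Second, the change of variables (with the correct test function $(\varphi\circ\Xi^n)\det D\Xi^n$, not just $\varphi\circ\Xi^n$) generates an additional lower-order term $\mathbb{L}^n D\tilde v^n$ coming from $\nabla(\det D\Xi^n)$. The paper absorbs this into the forcing by proving $\mathbb{L}^n D\tilde v^n\to 0$ in $L^2(0,T;H)$, which in turn requires the a priori bound on $\|D\tilde v^n(t)\|$ furnished by Proposition~\ref{prop:equibdd-vn}. Your sketch omits this term entirely. Once these two issues are handled, the rest of your outline matches the paper.
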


In order to prove this proposition it is convenient to use the following elementary result, whose proof, based on a change of variables, is omitted (for a similar result see \cite[Lemma A.7]{DalMaso-Luc}).

\begin{lemma}\label{lemma:conv-composiz}
For every $n \in \mathbb{N}$ let $h^n, \, h \in H$ and let $\Lambda^n,\,\Lambda: \overline{\Omega} \to \overline{\Omega}$ be $C^1$ diffeomorphisms. Assume that $h^n \to h$ in $H$ and $\Lambda^n \to \Lambda$ in $C^1$. Assume also that $ \det D\Lambda^n(x) >0$ and $ \det D\Lambda(x) >0$ for every $x\in \overline\Omega$ and $n \in \mathbb{N}$. Then $h^n\circ \Lambda^n \to h\circ \Lambda$ as $n \to \infty$ in $H$.
\end{lemma}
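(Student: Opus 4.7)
The plan is to prove this by the standard three-term change-of-variables decomposition. I would write
\[
h^n \circ \Lambda^n - h \circ \Lambda = (h^n-h)\circ\Lambda^n + (h\circ\Lambda^n - h\circ\Lambda)
\]
and bound each piece separately in $H = L^2(\Omega;\Rd)$.

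For the first piece, the substitution $z = \Lambda^n(x)$ (using that $\Lambda^n$ is a $C^1$ diffeomorphism of $\overline\Omega$ with positive Jacobian) gives
\[
\|(h^n-h)\circ\Lambda^n\|_H^2 = \int_\Omega \frac{|h^n(z)-h(z)|^2}{\det D\Lambda^n((\Lambda^n)^{-1}(z))}\,\dx.
\]
The $C^1$-convergence $\Lambda^n\to\Lambda$ upgrades to uniform convergence of $\det D\Lambda^n$ to $\det D\Lambda$ on $\overline\Omega$, and the latter is continuous and strictly positive on this compact set, so there exists $c>0$ with $\det D\Lambda^n\geq c$ for all sufficiently large $n$. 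Hence the first piece is bounded by $c^{-1}\|h^n-h\|_H^2 \to 0$.

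For the second piece I would use density of $C(\overline\Omega;\Rd)$ in $H$: given $\varepsilon>0$, choose $\phi\in C(\overline\Omega;\Rd)$ with $\|h-\phi\|_H<\varepsilon$ and split
\[
\|h\circ\Lambda^n - h\circ\Lambda\|_H \leq \|(h-\phi)\circ\Lambda^n\|_H + \|\phi\circ\Lambda^n - \phi\circ\Lambda\|_H + \|(\phi-h)\circ\Lambda\|_H.
\]
The first and third terms are each bounded by a fixed multiple of $\varepsilon$ by the same change-of-variables argument. The middle term tends to $0$ as $n\to\infty$: since $\phi$ is uniformly continuous on $\overline\Omega$ and $\Lambda^n\to\Lambda$ uniformly, $\phi\circ\Lambda^n$ converges uniformly to $\phi\circ\Lambda$ on $\overline\Omega$, hence also in $H$. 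Letting first $n\to\infty$ and then $\varepsilon\to 0$ yields the claim.

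The only mild subtlety, and the closest thing to an obstacle, is securing the uniform lower bound $\det D\Lambda^n \geq c>0$; as noted, this follows from the $C^1$-convergence together with the continuity and strict positivity of $\det D\Lambda$ on the compact set $\overline\Omega$. Since the entire argument is routine, the paper's choice to omit the proof is understandable.
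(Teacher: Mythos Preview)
Your proof is correct and follows precisely the change-of-variables approach the paper indicates (the paper omits the proof, merely pointing to \cite[Lemma~A.7]{DalMaso-Luc} for a similar argument). The decomposition, the Jacobian lower bound from $C^1$-convergence on the compact $\overline\Omega$, and the density-plus-uniform-continuity step for the second term are exactly the standard ingredients one expects here.
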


\begin{proof}[Proof of Proposition \ref{lemma:convergenza(g,0)}]
To overcome the difficulty due to the fact that we may have $\Gamma_0^n\neq \Gamma_0$, by a change of variables we transform our problem into a problem with new cracks $\hat\Gamma^n_t$ satisfying $\hat\Gamma^n_0=\Gamma_0$ for every $n$, to which we can apply the results of \cite{Caponi} and \cite{Caponi-tesi}.

For every $n$ and $t$ we define $\hat\Gamma^n_t:=\Xi^n(\Gamma^n_t)\subset\Gamma$ and observe that $\hat\Gamma^n_t$ satisfies (H10). The vector spaces $\hat{V}^n_t$ and $\hat{V}^{n,D}_t$ are defined as ${V}^n_t$ and ${V}^{n,D}_t$ (see \eqref{eq:def-spazi}) with $\Gamma_t$ replaced by $\hat\Gamma^n_t$, while $\hat{\mathcal{V}}^{n}$ and $\hat{\mathcal{V}}^{n,D}$ are defined as ${\mathcal{V}}^{n}$ and ${\mathcal{V}}^{n,D}$ (see \eqref{eq:Vcorsivo} and \eqref{eq:VcorsivoD}) with $V_t$ and $V^D_t$ replaced by $\hat{V}^n_t$ and $\hat{V}^{n,D}_t$.

For every $t\in [0,T]$ let $\hat v^n(t):=v^n(t)\circ \Theta^n $, $\hat u^n_D(t):= u^n_D(t) \circ \Theta^n$, $\hat u^{0,n}:=u^{0,n}\circ \Theta^n $, $\hat u^{1,n}:=u^{1,n}\circ\Theta^n$, and $\hat{g}^n(t):=g(t) \circ \Theta^n$. It is easy to see that $\hat v^n\in \hat{\mathcal{V}}^{n}$, $\hat v^n-\hat u^n_D\in \hat{\mathcal{V}}^{n,D}$, $\hat v^n(0)=\hat u^{0,n},\,\dot{\hat v}^n(0)=\hat u^{1,n}$.

To write the equation satisfied by $\hat v^n$ we introduce $\hat{\mathbb{A}}^n\colon \overline{\Omega} \to \mathcal{L}(\Rdd;\Rdd)$ defined as
\begin{equation}\label{eq:hatAn}
        \hat{\mathbb{A}}^n(y)[A]:={\A}^n_e(\Theta^n(y)){\big[} AD\Xi^n(\Theta^n(y)){\big]}(D\Xi^n(\Theta^n(y)))^T \quad \text{ for all } A \in \Rdd,
\end{equation}
where $\A^n$ is defined in \eqref{eq:A-An}. We note that $\hat{\A}^n$ is of class $C^1$, with equibounded $C^1$ norm. Moreover it is symmetric on $\mathcal{L}(\Rdd,\Rdd)$.

Setting $h^n(x):=\nabla[\det D\Xi^n(x)  ]$, we introduce $\mathbb{L}^n\colon \overline{\Omega} \to \mathcal{L}(\Rdd;\Rd)$ defined as
\begin{equation*}
    \mathbb{L}^n(y)[A]={\A}^n_e(\Theta^n(y)){\big[} A D\Xi^n(\Theta^n(y)){\big]}  h^n(\Theta^n(y)) \,\det D\Theta^n(y) \quad \text{ for all } A \in \Rdd.
\end{equation*}
Let $\varphi \in \hat{\mathcal{V}}^{n,D} $ with $\varphi(0)=\varphi(T)=0$. Using $(\varphi(t)\circ \Xi^n)\det D\Xi^n$ as test function in the equation for $v^n(t)$ we get
\begin{equation*}
   - \int^T_0 ( \dot{\hat v}^n(t),\dot{\varphi}(t))\,\dt + \int^T_0  (\hat{\mathbb{A}}^nD\hat v^n(t), D\varphi(t))\,\dt + \int^T_0 (\mathbb{L}^nD\hat v^n(t) ,  \varphi(t)) \, \dt =
\end{equation*}
\begin{equation*}
    \int^T_0 (\hat{g}^n(t) , \varphi(t))\,\dt.
\end{equation*}
By Proposition \eqref{prop:equibdd-vn} the sequence $||v^n||_{\mathcal{V}^n}$ is bounded and in particular $\|Dv^n(t)\|$ is uniformly bounded with respect to $n$ and $t$. By the definition of $\hat v^n$ and \eqref{hpboooo} also $\|D\hat v^n(t)\|$ is uniformly bounded with respect to $n$ and $t$. Since $\det D\Xi^n \to 1$ in $C^1(\overline{\Omega})$, we have $\nabla[\det D\Xi^n] \to 0$ in $C^0(\overline{\Omega},\,\Rd{})$, which implies that $\mathbb{L}^n \to 0$ uniformly as $n\to +\infty$. From this fact and the uniform bound on $\|D \hat v^n(t)\|$ we get
\begin{equation}\label{eq:hatLn}
    \|\mathbb{L}^nD\hat v^n(t)\| \to 0 \text{ as } n\to +\infty,
\end{equation}
uniformly in $t$. Therefore, setting
\begin{equation}\label{eq:hatfn}
    {\hat f}^n:= \hat{g}^n - \mathbb{L}^nD\hat v^n,
\end{equation}
we conclude that
\begin{align}
    &\label{eq:hat1} \,\,\,\hat v^n\in \hat{\mathcal{V}}^{n} \quad \text{and} \quad \hat v^n-\hat u^n_D\in \hat{\mathcal{V}}^{n,D},\\
    &- \int^T_0 ( \dot{\hat{v}}^n(t),\dot{{\varphi}}(t))\,\dt + \int^T_0  (\hat{\mathbb{A}}^nD\hat v^n(t), D\varphi(t))\,\dt   = \int^T_0 (\hat f^n(t) , \varphi(t))\,\dt,\nonumber\\
    &\label{eq:hat2} \text{ for all } \varphi\in \hat{\mathcal{V}}^{n,D} \text{ such that } \varphi(0)=\varphi(T)=0,\\
    & \label{eq:hat3}\,\,\,\hat v^n(0)=\hat u^{0,n} \quad \text{in }  H \quad  \text{and} \quad  \dot{\hat v}^n(0)=\hat u^{1,n} \quad \text{in } (V^D_0)^*.
\end{align}
In order to apply the results of \cite{Caponi-tesi} we define $\hat\Phi^n(t, y):=\Xi^n( \Phi^n(t, \Theta^n(y)))$, $\hat\Psi^n(t, x):=\Xi^n( \Psi^n(t, \Theta^n(x)))$. We observe that $\hat\Phi^n$ and $\hat\Psi^n$ satisfy (H11)-(H14) with $\Gamma_t$ replaced by $\hat\Gamma^n_t$.
Since in general $\hat{\A}^n[A]\neq \hat{\A}^n[A_{sym}] $ for some $A\in \Rdd$, we cannot apply the results of \cite{Caponi}. However it is possible to use the results of \cite{Caponi-tesi} which hold under more general assumptions involving the tensor
\begin{equation*}
\hat{\mathbb{B}}^n(t, y)[A]:= \hat{\mathbb{A}}^n(\hat\Phi^n(t,y))[A D\hat\Psi^n(t, \hat\Phi^n(t,y))] D\hat\Psi^n(t, \hat\Phi^n(t, y))^T 
\end{equation*}
\begin{equation*}
    - A {\dot{\hat\Psi}^n(t, \hat\Phi^n(t,y))}
    \otimes
    {\dot{\hat\Psi}^n(t, \hat\Phi^n(t,y))},
\end{equation*}
for all $A \in \Rdd{}$, $t\in [0,T]$, $y\in \overline{\Omega}$. We claim that there exists two constants $c_0,\,c_1>0$ (independent of $n$) such that, for $n$ large enough, we have
\begin{equation}\label{eq:bound-su-hatBn}
   (\hat{\mathbb{B}}^n(t)D\varphi,\,D\varphi) \geq c_0 \|\varphi\|^2_{V_0} - c_1\| \varphi\|^2
\end{equation}
for all $\varphi \in V_0 $ and $t \in [0,T]$. This is the hypothesis on $\hat{\mathbb{B}}^n$ required in \cite{Caponi-tesi}. 

To prove the claim we use (H3), (H15), and \eqref{hpboooo} (which are satisfied uniformly in $n$) and by standard computations (see, for instance, \cite[Section 1.2]{Caponi-tesi}) we obtain
\begin{align}
   (\hat{\mathbb{B}}^n(t)D\varphi,\,D\varphi) \geq& \int_\Omega \big|D\varphi(y)D\Xi^n(\Theta^n(y))D\Psi^n(t,\Phi^n(t,\Theta^n(y)))|^2 \omega^n(t,y) \,dy\nonumber\\
   &-\alpha_0\min_{[0,T] \times\overline{\Omega}}\{\det D\Xi^n\det D\Psi^n\} \int_\Omega |\varphi(\Xi^n(\Psi^n(t,y)))|^2\,dy\label{eq:contacciBn}
\end{align}
where
\begin{equation*}
    \omega^n(t,y):=\frac{\alpha_0 m_{det}(\Psi^n)}{KM_{det}(\Psi^n)}\min_{\overline{\Omega}}\{\det D\Xi^n\}\min_{\overline{\Omega}}\{\det D\Theta^n\}-|\dot \Phi^n(t,\Theta^n(y))|^2,
\end{equation*}
while $m_{det}(\Psi^n)$, $M_{det}(\Psi^n)$, $\alpha_0$, and $K$ are the constants that appear in (H15), (H16), and \eqref{eq:korn-unif}.
Since the inverse of the matrices  $D\Xi^n(x)D\Psi^n(t,\Phi^n(t,x))$ are bounded uniformly with respect to $n$, $t$, and $x$, there exists a constant $\beta>0$ such that 
\begin{equation*}
    \int_\Omega \big|D\varphi(y)D\Xi^n(\Theta^n(y))D\Psi^n(t,\Phi^n(t,\Theta^n(y)))|^2 \omega^n(t,y) \,dy \geq \beta \int_\Omega \big|D\varphi(y)|^2 \omega^n(t,y) \,dy
\end{equation*}
for all $n$ and $t$. Moreover by \eqref{eq:conv-diff} and \eqref{hpboooo} there exists a constant $\gamma>0$ such that 
\begin{equation*}
    \alpha_0\min_{[0,T] \times\overline{\Omega}}\{\det D\Xi^n\det D\Psi^n\} \int_\Omega |\varphi(\Xi^n(\Psi^n(t,y)))|^2\,dy \leq \gamma  \int_\Omega |\varphi(y)|^2\,dy
\end{equation*}
for all $n$ and $t$. Therefore \eqref{eq:contacciBn} gives
\begin{equation}\label{eq:quasiboundsuBn}
    (\hat{\mathbb{B}}^n(t)D\varphi,\,D\varphi) \geq \beta \int_\Omega \big|D\varphi(y)|^2 \omega^n(t,y) \,dy - \gamma  \int_\Omega |\varphi(y)|^2\,dy.
\end{equation}
To conclude the proof of the claim, we define
\begin{equation*}
    \omega(t,y):=\frac{\alpha_0 m_{det}(\Psi)}{KM_{det}(\Psi)}-|\dot \Phi(t,y)|^2.
\end{equation*}
By \eqref{eq:conv-diff}, \eqref{hpboooo} and \eqref{eq:bound-det}, we have $\omega^n \to \omega$ uniformly on $[0,T]\times \overline\Omega$. By (H15) and by continuity there exists $\varepsilon>0$ such that $\omega(t,y)\geq 2\varepsilon$ for all $(t,y)\in [0,T]\times \overline\Omega$. By uniform convergence there exists $n_\varepsilon$ such that $\omega^n(t,y)\geq \varepsilon$ for all $(t,y)\in [0,T]\times \overline\Omega$ and for all $n>n_\varepsilon$. This inequality together with \eqref{eq:quasiboundsuBn} implies \eqref{eq:bound-su-hatBn} and concludes the proof of the claim.

By \eqref{eq:conv-diff} and \eqref{hpboooo} we get $\hat\Phi^n \to \Phi$ and $\hat\Psi^n \to \Psi$ in $C^2$, while \eqref{eq:hatAn} and \eqref{hpboooo} give $\hat \A^n \to \A$ in $C^1$. Moreover applying Lemma \ref{lemma:conv-composiz} to the functions and their derivatives we can prove that $\hat u^{0,n} \to u^0 $ in $V_0$, $\hat u^{1,n} \to u^1$ in $H$, $\hat u^n_D \to u_D$ in $H^2(0,T; H) \cap H^1(0,T; V_0)$, and $\hat g^n \to g$ in $L^2(0,T; H)$. Using \eqref{eq:hatLn} and \eqref{eq:hatfn} we have that $\hat f^n \to g$ in $L^2(0,T; H)$. We are now in a position to apply \cite[Theorem 1.4.1]{Caponi-tesi} to problem \eqref{eq:hat1}-\eqref{eq:hat3} and we obtain
\begin{equation*}
    (  \hat v^n(t),\, D\hat v^n(t), \, \dot{\hat v}^n(t) ) \to (  v(t),\, D v(t), \, \dot{ v}(t) ) \quad \text{ in } H\times \underline{H} \times H
 \end{equation*}
 for every $t \in [0,T]$. Since $$v^n(t,\cdot) = \hat v^n (t, \Xi^n(\cdot)),\quad Dv^n(t,\cdot) = D \hat v^n (t, \Xi^n(\cdot))D\Xi^n(\cdot), \quad \dot{v}^n(t,\cdot) = \dot{\hat v}^n (t, \Xi^n(\cdot)),$$ using Lemma \ref{lemma:conv-composiz} we get \eqref{lemma:convergenza(g,0)-p}
 for every $t \in [0,T]$.
\end{proof}

To use Proposition \ref{lemma:convergenza(g,0)} in the proof of the convergence $\mathcal{G}^n(w) \to \mathcal{G}(w)$ we need the following approximation result.

\begin{lemma}\label{lemma:approx-forzante}
 Let $G\in H^1((0,T);\,\tilde{H})$. For every $\varepsilon > 0$ there exists a compact neighborhood $K_\varepsilon$ of $\Gamma \cap \overline{\Omega}$ and $G_\varepsilon\in H^1((0,T);\, \tilde{H})$ such that $G_\varepsilon(t)\in C^\infty_c(\Omega\setminus K_\varepsilon;\,\mathbb{R}^{d\times d}_{sym})$ for every $t\in [0,T]$ and $$\|G_\varepsilon - G\|_{L^\infty(0,T;\tilde{H})} + \|\dot G_\varepsilon - \dot G\|_{L^2(0,T;\tilde{H})} < \varepsilon.$$
\end{lemma}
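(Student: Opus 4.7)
The plan is two-step: first truncate spatially so that $G$ has support in a compact subset of $\Om\setminus(\Gamma\cap\overline\Om)$ bounded away from $\partial\Om$, then mollify in space to achieve $C^\infty$ values; the set $K_\varepsilon$ is determined by the truncation step and the mollification parameter is then chosen small enough to preserve the support condition.

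\textbf{Truncation.} Since $\Gamma\cap\overline\Om$ is compact and is contained in the $(d-1)$-dimensional manifold $\Gamma$, it has zero Lebesgue measure in $\Rd$. Hence for any $\eta>0$ I can pick a compact neighborhood $K_\varepsilon$ of $\Gamma\cap\overline\Om$ in $\overline\Om$ and a cutoff $\chi\in C_c^\infty(\Om)$ with $0\le\chi\le 1$, $\chi\equiv 0$ on a neighborhood of $K_\varepsilon$ and of $\partial\Om$, such that $|\{\chi<1\}|<\eta$. Set $\tilde G(t):=\chi G(t)$. From the embedding $H^1((0,T);\tilde H)\hookrightarrow C^0([0,T];\tilde H)$ the set $\{G(t):t\in[0,T]\}$ is compact in $\tilde H$; a compact subset of $L^2$ is equi-integrable, so choosing $\eta$ small enough I obtain
$$\sup_{t\in[0,T]}\|(1-\chi)G(t)\|_{\tilde H}<\varepsilon/3.$$
For the derivative, $|\dot G|^2\in L^1((0,T)\times\Om)$, so absolute continuity of the integral gives, after possibly decreasing $\eta$,
$$\|(1-\chi)\dot G\|_{L^2(0,T;\tilde H)}<\varepsilon/3.$$

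\textbf{Mollification.} Let $\rho_\sigma$ be a standard spatial mollifier on $\Rd$ and, extending $\tilde G(t)$ by zero outside $\Om$, set $G_\varepsilon(t):=\rho_\sigma*\tilde G(t)$. By construction $d_0:=\operatorname{dist}(\operatorname{supp}\chi,\,K_\varepsilon\cup(\Rd\setminus\Om))$ is strictly positive; for $\sigma<d_0$ we have $G_\varepsilon(t)\in C_c^\infty(\Om\setminus K_\varepsilon;\Rdd_{sym})$ for every $t$, and $\dot G_\varepsilon=\rho_\sigma*\dot{\tilde G}$. Since $\{\tilde G(t)\}_{t\in[0,T]}$ is compact in $\tilde H$, mollification converges uniformly on this family, hence $\sup_t\|G_\varepsilon(t)-\tilde G(t)\|_{\tilde H}\to 0$ as $\sigma\to 0$. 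Because $\dot{\tilde G}\in L^2((0,T);\tilde H)$, the Young-type bound $\|\rho_\sigma*\dot{\tilde G}(t)\|_{\tilde H}\le\|\dot{\tilde G}(t)\|_{\tilde H}$ together with pointwise-in-$t$ convergence $\rho_\sigma*\dot{\tilde G}(t)\to\dot{\tilde G}(t)$ in $\tilde H$ and dominated convergence yields $\|\dot G_\varepsilon-\dot{\tilde G}\|_{L^2(0,T;\tilde H)}\to 0$. Fixing $\sigma\in(0,d_0)$ so that both mollification errors are below $\varepsilon/3$ and combining with the truncation estimates via the triangle inequality finishes the proof.

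\textbf{Main obstacle.} The delicate point is the uniform-in-$t$ cutoff estimate $\sup_t\|(1-\chi)G(t)\|_{\tilde H}<\varepsilon/3$: a pointwise-in-$t$ bound is immediate, but uniformity requires the family $\{|G(t)|^2\}_{t\in[0,T]}$ to be equi-integrable, which is not automatic from $G\in L^\infty((0,T);\tilde H)$. The key input is the compactness of $\{G(t)\}_{t\in[0,T]}$ in $\tilde H$ provided by the continuous embedding $H^1\hookrightarrow C^0$ in time.
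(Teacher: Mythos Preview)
Your proof is correct and takes a genuinely different route from the paper. The paper proceeds by first discretising in time: it replaces $G$ by its piecewise affine interpolant on a fine partition of $[0,T]$, which converges in $H^1(0,T;\tilde H)$ (hence also in $L^\infty(0,T;\tilde H)$), and then approximates the finitely many nodal values in $\tilde H$ by elements of $C^\infty_c(\Omega\setminus\Gamma;\Rdd_{sym})$; since only finitely many spatial approximations are involved, a single compact neighbourhood $K_\varepsilon$ accommodates all of them, and the piecewise affine structure propagates the nodal error uniformly in $t$.

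Your approach instead keeps $t$ continuous and works entirely on the spatial side, via cutoff and mollification. The point the paper's argument sidesteps---and which you correctly isolate as the main obstacle---is the uniform-in-$t$ control of the cutoff error; you handle it by observing that $G\in C^0([0,T];\tilde H)$ makes $\{G(t):t\in[0,T]\}$ compact, hence equi-integrable in $\tilde H$. This is a clean observation that the paper avoids by reducing to finitely many time instants. Either strategy is perfectly adequate here; the paper's is slightly more elementary in that it appeals only to density of $C^\infty_c(\Omega\setminus\Gamma)$ in $\tilde H$ at finitely many points, while yours is more direct and would generalise more readily to settings where time interpolation is less convenient.
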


\begin{remark} By (H22) and \eqref{hpboooo} for every $\varepsilon>0$ there exists $n_\varepsilon$ such that $\Gamma^n \subset K_\varepsilon$, for $n>n_\varepsilon$. From the properties of $G_\varepsilon$ follows that
 \begin{equation}\label{eq:int-parti}
     (G_\varepsilon(t), \, Ev)=-(\dive G_\varepsilon(t), \, v)
 \end{equation}
 for all $t\in [0,T]$ and for all $v\in V_n$, for $n>n_\varepsilon$.
 \end{remark}

\begin{proof}[Proof of Lemma \ref{lemma:approx-forzante}]
Given a partition of $[0,T]$, we can consider the piecewise affine interpolation of the values of $F$ at the nodes. It is well known that this interpolation converges in $H^1(0,T;\tilde{H})$ to $F$ as the fineness of the partition tends to zero. To conclude, it is enough to approximate in $\tilde{H}$ the values of $F$ at the nodes by elements of $C^\infty_c(\Omega\setminus\Gamma;\,\mathbb{R}^{d\times d}_{sym})$ and to consider the corresponding piecewise affine interpolation.
\end{proof}

\begin{proposition}\label{cor:convergenza(fn,Gn),Gamma0n-NEW}
Assume (H1)-(H23) and \eqref{eq:korn-unif}-\eqref{hpboooo}. Let $v^n$ be the solution of \eqref{eq:onde_1-n}-\eqref{eq:onde_3-n} and let $v$ be the solution of \eqref{eq:onde1}-\eqref{eq:onde3}. Then for every $t \in [0,T]$ we have
\begin{equation}\label{conv1-new}
    (   v^n(t),\, D v^n(t), \, \dot{ v}^n(t) ) \to (  v(t),\, D v(t), \, \dot{ v}(t) ) \quad \text{ in } H\times \underline{H} \times H.
 \end{equation}
 Moreover
 \begin{equation}\label{conv2-new}
    (v^n,\,Dv^n,\,\dot{v}^n) \to (v,\,Dv,\,\dot{v}) \quad \textnormal{in }\mathcal{W}=L^2((0,T); H\times \underline{H} \times H).
\end{equation}
\end{proposition}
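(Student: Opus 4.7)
The plan is to reduce the statement to the special case $F^n=F=0$ already handled in Proposition \ref{lemma:convergenza(g,0)}, by approximating the matrix-valued forcings $F^n$ and $F$ with smooth functions supported away from the cracks and using an integration by parts to convert them into $H$-valued forcings of the admissible type.

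First I would fix $\e>0$ and apply Lemma \ref{lemma:approx-forzante} to obtain a compact neighborhood $K_\e$ of $\Gamma\cap\overline\Om$ and $G_\e\in H^1(0,T;\tilde H)$ with $G_\e(t)\in C^\infty_c(\Om\setminus K_\e;\R^{d\times d}_{sym})$ satisfying the estimate in that lemma. By the remark following Lemma \ref{lemma:approx-forzante} we have $\Gamma^n\subset K_\e$ for every $n$ large enough, and by \eqref{eq:conf-forz} also $\|F^n-G_\e\|_{L^\infty(0,T;\tilde H)}+\|\dot F^n-\dot G_\e\|_{L^2(0,T;\tilde H)}<2\e$ for such $n$. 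I would then decompose $v^n=v^n_\e+w^n_\e$, where $v^n_\e$ is the solution of \eqref{eq:onde_1-n}-\eqref{eq:onde_3-n} with $F^n$ replaced by $G_\e$ and all other data unchanged, and $w^n_\e$ is the solution of the same problem with zero initial conditions, $u^n_D=0$, $f^n=0$, and $F^n$ replaced by $F^n-G_\e$. The identity \eqref{eq:int-parti}, valid for $n$ large thanks to the compact support of $G_\e$ in $\Om\setminus K_\e$, allows me to rewrite the weak formulation for $v^n_\e$ as \eqref{eq:onde_1-n}-\eqref{eq:onde_3-n} with $F^n=0$ and $f^n$ replaced by $f^n-\dive G_\e$; an analogous reformulation defines a limiting solution $v_\e$ with $f$ replaced by $f-\dive G_\e$. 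Since $f^n-\dive G_\e\to f-\dive G_\e$ in $L^2(0,T;H)$, Proposition \ref{lemma:convergenza(g,0)} yields the pointwise convergence of $(v^n_\e(t),Dv^n_\e(t),\dot v^n_\e(t))$ to $(v_\e(t),Dv_\e(t),\dot v_\e(t))$ in $H\times\Hm\times H$ for every $t\in[0,T]$.

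To handle the remainders I would apply Proposition \ref{prop:energ-soloF} to $w^n_\e$ and to $v-v_\e$, the latter being the solution of the analogous zero-data problem with forcing $F-G_\e$. Combined with the estimates on $F^n-G_\e$ and $F-G_\e$, this produces
\[
\sup_{t\in[0,T]}\|(w^n_\e(t),Dw^n_\e(t),\dot w^n_\e(t))\|_{H\times\Hm\times H}\leq C\e
\]
with $C$ independent of $n$, and a parallel bound for $v-v_\e$. A triangle inequality, letting first $n\to\infty$ and then $\e\to 0$, yields \eqref{conv1-new}. Finally, the $L^2$-in-time convergence \eqref{conv2-new} follows from \eqref{conv1-new} combined with the uniform bound $\|v^n(t)\|+\|Dv^n(t)\|+\|\dot v^n(t)\|\leq C$ supplied by Proposition \ref{prop:equibdd-vn}, via Lebesgue's dominated convergence theorem on $[0,T]$.

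The main obstacle is the reduction in the first step: the integration by parts used to eliminate $G_\e$ from the $E\varphi$ pairing must be valid uniformly in $n$, which requires $G_\e$ to be supported away from $\Gamma^n$ for all large $n$ simultaneously. This uniform separation is precisely what (H22) and \eqref{hpboooo} guarantee through the remark after Lemma \ref{lemma:approx-forzante}; without it the Neumann jump terms across $\Gamma^n$ would obstruct the reformulation of $v^n_\e$ as a solution of \eqref{eq:onde_1-n}-\eqref{eq:onde_3-n} with vanishing matrix-valued forcing.
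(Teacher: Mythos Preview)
Your strategy is the paper's: approximate $F$ by a smooth $G_\e$ supported away from the cracks (Lemma \ref{lemma:approx-forzante}), split off a remainder controlled by Proposition \ref{prop:energ-soloF}, integrate by parts via \eqref{eq:int-parti} to reduce the $G_\e$-problem to one with an $H$-valued forcing, invoke Proposition \ref{lemma:convergenza(g,0)}, and finish with Proposition \ref{prop:equibdd-vn} and dominated convergence.

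There is one small slip in the invocation of Proposition \ref{lemma:convergenza(g,0)}. In your decomposition $v^n_\e$ retains the original $f^n$ (only $F^n$ is replaced by $G_\e$), so after the integration by parts the scalar forcing is $f^n-\dive G_\e$, which still depends on $n$. Proposition \ref{lemma:convergenza(g,0)} as stated requires a \emph{single} $g\in L^2(0,T;H)$, the same for every $n$, so it does not literally apply to your $v^n_\e$. The paper avoids this by replacing \emph{both} $f^n$ and $F^n$ by $f$ and $G_\e$ in the definition of $v^n_\e$; the intermediate problem then carries the fixed forcing $g_\e=f-\dive G_\e$, Proposition \ref{lemma:convergenza(g,0)} applies verbatim, and the extra discrepancy $f^n-f$ is absorbed into the remainder $v^n-v^n_\e$ through \eqref{eq:conf-forz} and Proposition \ref{prop:energ-soloF}. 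Your version is easily repaired either by adopting this decomposition or by a further splitting that isolates $f^n-f$; alternatively one can note that the proof of Proposition \ref{lemma:convergenza(g,0)} goes through unchanged for a convergent sequence $g^n\to g$ (via Lemma \ref{lemma:conv-composiz}), but that extension is not what the proposition states.
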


\begin{proof}
Let $\varepsilon >0 $, let $G_\varepsilon$ the function in Lemma \ref{lemma:approx-forzante} with $G=F$. Let $v^n_\varepsilon$ solution of \eqref{eq:onde_1-n}-\eqref{eq:onde_3-n} with $f^n$ and $F^n$ replaced by $f$ and $G_\varepsilon$, let $v^\varepsilon$ solution of \eqref{eq:onde1}-\eqref{eq:onde3} with $F$ replaced by $G_\varepsilon$. By \eqref{eq:conf-forz} there exists $n_\varepsilon$ such that 
\begin{equation}\label{eq:dis1-NEW}
    \|f^n-f\|_{L^2(0,T;H)} + \|F^n - G_\varepsilon\|_{L^\infty(0,T;\tilde{H})}  + \|\dot F^n - \dot G_\varepsilon\|_{L^2(0,T;\tilde{H})}  < \varepsilon
\end{equation}
for every $n>n_\varepsilon.$ The function $v^n-v^n_\varepsilon$ is the solution of problem \eqref{eq:onde_1-n}-\eqref{eq:onde_3-n} with $f^n$ and $F^n$ replaced by $f^n-f$ and $F^n-G_\varepsilon$ and $u^n_D$, $f^n$, $u^{n,0}$, $u^{1.n}$ replaced by zero. Then by Proposition \ref{prop:energ-soloF} there exists a constant $C(T)$ depending on $T$ (independent of $n$ and $\varepsilon$) such that
\begin{equation}\label{eq:eps-new-n}
    \|v^n - v^n_\varepsilon\|_{\mathcal{V}^{n,\infty}} \leq C(T) \varepsilon .
\end{equation}
for every $ n > n_\varepsilon$. Similarly we  can prove
\begin{equation}\label{eq:eps-new}
    \|v - v_\varepsilon\|_{\mathcal{V}^{\infty}} \leq C(T) \varepsilon.
\end{equation}

 Changing the value of $n_\varepsilon$, by \eqref{eq:int-parti} we have that $v^n_\varepsilon$ is the solution of \eqref{eq:onde_1-n}-\eqref{eq:onde_3-n} with $f^n$ replaced by $g_\varepsilon:=f-\dive G_\varepsilon$ and $F^n$ replaced by $0$, while $v_\varepsilon$ is the solution of \eqref{eq:onde1}-\eqref{eq:onde3} with $f$ replaced by $g_\varepsilon:=f-\dive G_\varepsilon$ and $F$ replaced by $0$. By Proposition \ref{lemma:convergenza(g,0)} for every $t \in [0,T]$ we have 
\begin{equation}\label{eq:okasdbvoafb}
    (   v^n_\varepsilon(t),\, D v^n_\varepsilon(t), \, \dot{ v}^n_\varepsilon(t) ) \to (  v_\varepsilon(t),\, D v_\varepsilon(t), \, \dot{ v}_\varepsilon(t) ) \quad \text{ in } H\times \underline{H} \times H.
 \end{equation}
Since 
\begin{align*}
    & \|(   v^n(t),\, D v^n(t), \, \dot{ v}^n(t) ) - (  v(t),\, D v(t), \, \dot{ v}(t) )\| \leq \|v^n - v^n_\varepsilon\|_{\mathcal{V}^{n,\infty}} \\
    &  + \| (   v^n_\varepsilon(t),\, D v^n_\varepsilon(t), \, \dot{ v}^n_\varepsilon(t) ) - (  v_\varepsilon(t),\, D v_\varepsilon(t), \, \dot{ v}_\varepsilon(t) ) \| + \|v - v_\varepsilon\|_{\mathcal{V}^{\infty}},
\end{align*}
by \eqref{eq:eps-new-n}-\eqref{eq:okasdbvoafb} we get
 \begin{equation*}
     \limsup_{n\to + \infty}  \|(   v^n(t),\, D v^n(t), \, \dot{ v}^n(t) ) - (  v(t),\, D v(t), \, \dot{ v}(t) )\| \leq 2C(T)\varepsilon
 \end{equation*}
 for every $t\in [0,T]$. By the arbitrareness of $\varepsilon$ we obtain \eqref{conv1-new}. Finally, using the estimate in Proposition \ref{prop:equibdd-vn} and the Dominated Convergence Theorem we obtain \eqref{conv2-new}.
\end{proof}

\begin{corollary}\label{rem:convergenza(fn,Gn),Gamma0n-NEW}
 Assume (H1)-(H23) and \eqref{eq:korn-unif}-\eqref{hpboooo}. Then for every $w \in \mathcal{W}$ we have
 \begin{equation*}
     \mathcal{G}^n(w) \to \mathcal{G}(w) \quad \mathcal{W}.
 \end{equation*}
\end{corollary}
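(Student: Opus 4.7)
The plan is to reduce the statement to Proposition \ref{cor:convergenza(fn,Gn),Gamma0n-NEW} by checking that the modified forcing term $F^n+\mathcal{T}^n w$ converges to $F+\mathcal{T}w$ in $H^1(0,T;\tilde H)$. Indeed, by the definition of $\mathcal{G}^n$ and $\mathcal{G}$, the triple $\mathcal{G}^n(w)$ is precisely $(z^n,Dz^n,\dot z^n)$, where $z^n$ solves \eqref{eq:onde_1-n}-\eqref{eq:onde_3-n} with external forcing $(f^n,F^n+\mathcal{T}^n w)$, initial data $(u^{0,n},u^{1,n})$, and Dirichlet datum $u_D^n$; similarly $\mathcal{G}(w)=(z,Dz,\dot z)$ solves \eqref{eq:onde1}-\eqref{eq:onde3} with forcing $(f,F+\mathcal{T}w)$, initial data $(u^0,u^1)$, and Dirichlet datum $u_D$. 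The convergences of $f^n$, $u^{0,n}$, $u^{1,n}$, $u_D^n$ to their limits are already part of our standing hypotheses, so the only point to verify is that $F^n+\mathcal{T}^n w \to F+\mathcal{T}w$ in $H^1(0,T;\tilde H)$.

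By \eqref{eq:conf-forz} it suffices to prove that $\mathcal{T}^n w\to \mathcal{T}w$ in $H^1(0,T;\tilde H)$. Writing
\begin{equation*}
(\mathcal{T}^n w - \mathcal{T} w)(t)=\int_0^t e^{\tau-t}(\V^n_e-\V_e)w_2(\tau)\,\dtau,
\end{equation*}
the very same computation leading to \eqref{eq:Tau-new} and \eqref{eq:dotTau-new}, applied to the tensor $\V^n_e-\V_e$, gives
\begin{equation*}
\|\mathcal{T}^n w-\mathcal{T}w\|_{L^\infty(0,T;\tilde H)}\leq T^{1/2}\|\V^n-\V\|_\infty\|w\|_{\mathcal{W}},
\end{equation*}
\begin{equation*}
\|\dot{\wideparen{\mathcal{T}^n w-\mathcal{T}w}}\|_{L^2(0,T;\tilde H)}\leq (1+T)\|\V^n-\V\|_\infty\|w\|_{\mathcal{W}}.
\end{equation*}
By \eqref{eq:conv-Vn}, $\|\V^n-\V\|_\infty\to 0$, so both right-hand sides tend to zero, yielding $\mathcal{T}^n w\to \mathcal{T}w$ in $H^1(0,T;\tilde H)$.

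Having established the convergence of all the data, Proposition \ref{cor:convergenza(fn,Gn),Gamma0n-NEW} applied with $F^n$ replaced by $F^n+\mathcal{T}^n w$ and $F$ replaced by $F+\mathcal{T}w$ yields $(z^n,Dz^n,\dot z^n)\to (z,Dz,\dot z)$ in $\mathcal{W}$, i.e., $\mathcal{G}^n(w)\to \mathcal{G}(w)$ in $\mathcal{W}$, which is the desired conclusion. There is essentially no obstacle here: the construction of $\mathcal{T}^n$ was engineered so that the estimates match, and the only nontrivial content (the continuous dependence for the pure wave problem on varying cracks and varying data) has already been packaged into Proposition \ref{cor:convergenza(fn,Gn),Gamma0n-NEW}.
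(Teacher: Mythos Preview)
Your proof is correct and follows exactly the same route as the paper: establish $\mathcal{T}^n w\to\mathcal{T}w$ in $H^1(0,T;\tilde H)$ from \eqref{eq:conv-Vn}, then invoke Proposition~\ref{cor:convergenza(fn,Gn),Gamma0n-NEW} with $F^n,F$ replaced by $F^n+\mathcal{T}^n w,\,F+\mathcal{T}w$. The only difference is that you spell out the estimates for $\mathcal{T}^n w-\mathcal{T}w$ explicitly, whereas the paper simply asserts the convergence.
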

\begin{proof}
By \eqref{eq:conv-Vn} we get $\mathcal{T}^nw \to \mathcal{T}w$ in $H^1(0,T; \tilde{H})$  for every $w \in \mathcal{W}$. The result follows from Proposition \ref{cor:convergenza(fn,Gn),Gamma0n-NEW} with $F^n$ and $F$ replaced by $F^n + \mathcal{T}^nw$ and $F+ \mathcal{T}w$.
\end{proof}

As a consequence of Lemma \ref{lemma:converg-punti-fissi}, Proposition \ref{lemma:equicontratt}, and Corollary \ref{rem:convergenza(fn,Gn),Gamma0n-NEW} we obtain the continuous dependence result when $T$ is small enough.

\begin{theorem}\label{thm:cont-Tpiccolo-NEW}
Assume that $ B (T + T^3) <1$, where $B$ is the constant in Proposition \ref{lemma:equicontratt}. Then the conclusion of Theorem \ref{thm:dip-cont-main} holds.
\end{theorem}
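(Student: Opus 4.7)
My plan is to apply Lemma~\ref{lemma:converg-punti-fissi} in the Banach space $\mathcal{W}$. Proposition~\ref{lemma:equicontratt} shows that each $\mathcal{G}^n$ is Lipschitz on $\mathcal{W}$ with the common constant $\lambda:=B(T+T^3)$, and the same proof (stripping the index $n$) applies to $\mathcal{G}$; the hypothesis $\lambda<1$ makes all of them strict contractions with the same constant. By construction $w^n:=(u^n,Du^n,\dot u^n)$ is a fixed point of $\mathcal{G}^n$ and $w:=(u,Du,\dot u)$ is a fixed point of $\mathcal{G}$, and each is unique. Corollary~\ref{rem:convergenza(fn,Gn),Gamma0n-NEW} gives $\mathcal{G}^n(y)\to\mathcal{G}(y)$ in $\mathcal{W}$ for every $y\in\mathcal{W}$, so Lemma~\ref{lemma:converg-punti-fissi} yields
\[ w^n\to w \quad\text{in } \mathcal{W}=L^2(0,T;H\times\underline H\times H). \]

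This is weaker than the pointwise-in-$t$ convergence required by Theorem~\ref{thm:dip-cont-main}, and the upgrade is the main technical step. I would use the fact that $u^n$ solves \eqref{eq:onde_1-n}--\eqref{eq:onde_3-n} with $F^n$ replaced by $F^n+\mathcal{T}^n w^n$, and $u$ solves \eqref{eq:onde1}--\eqref{eq:onde3} with $F$ replaced by $F+\mathcal{T}w$. Proposition~\ref{cor:convergenza(fn,Gn),Gamma0n-NEW} will then deliver the desired pointwise convergence provided one checks that
\[ F^n+\mathcal{T}^n w^n \to F+\mathcal{T}w \quad \text{in } H^1(0,T;\tilde H). \]
The piece $F^n\to F$ is precisely \eqref{eq:conf-forz}. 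For the rest I would split
\[ \mathcal{T}^n w^n - \mathcal{T}w = (\mathcal{T}^n-\mathcal{T})w^n + \mathcal{T}(w^n-w), \]
control the first summand by the analogues of \eqref{eq:Tau-new}--\eqref{eq:dotTau-new} with $\V$ replaced by $\V^n-\V$ (which vanish by \eqref{eq:conv-Vn} and the uniform $\mathcal{W}$-bound on $w^n$), and the second by the same estimates with $\V$ together with the $\mathcal{W}$-convergence of the previous step.

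The $\mathcal{W}$-bound on $w^n$ used above is immediate from the contraction property, since $\|w^n\|_{\mathcal{W}}\leq\lambda\|w^n\|_{\mathcal{W}}+\|\mathcal{G}^n(0)\|_{\mathcal{W}}$ and $\|\mathcal{G}^n(0)\|_{\mathcal{W}}$ is uniformly bounded by Proposition~\ref{prop:equibdd-vn} applied to the purely elastodynamic problem with forcing $F^n$ and data converging through \eqref{eq.convDir}, \eqref{eq:conf-forz}, \eqref{eq.convu0u1}. Finally, the uniform pointwise bound in Theorem~\ref{thm:dip-cont-main} follows from one further application of Proposition~\ref{prop:equibdd-vn}, with forcing $F^n+\mathcal{T}^n w^n$ whose $H^1(0,T;\tilde H)$-norm has just been bounded uniformly in~$n$; this yields $\|u^n\|_{\mathcal{V}^{n,\infty}}\leq C$, which encodes exactly the required pointwise estimate. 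The only genuine obstacle is the upgrade step, because it forces one to track the nonlinear coupling $\mathcal{T}^n w^n$ in $H^1$ (rather than only in $\mathcal{W}$) along the fixed-point sequence; the rest of the argument is a straightforward assembly of the pieces already in place.
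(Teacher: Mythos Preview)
Your proposal is correct and follows essentially the same route as the paper: apply Lemma~\ref{lemma:converg-punti-fissi} (using Proposition~\ref{lemma:equicontratt} and Corollary~\ref{rem:convergenza(fn,Gn),Gamma0n-NEW}) to obtain $w^n\to w$ in $\mathcal{W}$, then upgrade to pointwise convergence via Proposition~\ref{cor:convergenza(fn,Gn),Gamma0n-NEW} after checking $F^n+\mathcal{T}^n w^n\to F+\mathcal{T}w$ in $H^1(0,T;\tilde H)$, and finally get the uniform bound from Proposition~\ref{prop:equibdd-vn}. Your separate contraction argument for the uniform $\mathcal{W}$-bound on $w^n$ is harmless but unnecessary, since boundedness already follows from the convergence $w^n\to w$ in $\mathcal{W}$.
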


\begin{proof}
By Corollary \ref{rem:convergenza(fn,Gn),Gamma0n-NEW} $\mathcal{G}^n(w) \to \mathcal{G}(w)$ in $\mathcal{W}$ for every $w \in \mathcal{W}$. By Proposition \ref{lemma:equicontratt} the maps $\mathcal{G}^n$ have the same contraction constant $ B (T + T^3) <1$. Then we are in a position to apply Lemma \ref{lemma:converg-punti-fissi} and we get
\begin{equation}\label{eq:wntow}
    w^n:=(u^n,\,Du^n,\,\dot{u}^n) \to (u,\,Du,\,\dot{u})=:w \quad \textnormal{in } \mathcal{W}=L^2((0,T); H\times \underline{H} \times H).
\end{equation}
From this convergence and \eqref{eq:conv-Vn}, we obtain $\mathcal{T}^nw^n \to \mathcal{T}w$ in $H^1(0,T; \tilde{H})$ and we can apply Proposition \ref{cor:convergenza(fn,Gn),Gamma0n-NEW}, with forcing term $F^n$ and $F$ replaced by $F^n + \mathcal{T}^nw^n $ and $F + \mathcal{T}w $. Since $F^n+\mathcal{T}^nw^n \to F+\mathcal{T}w$ in $H^1(0,T; \tilde{H})$ we get
\begin{equation*}
    (   u^n(t),\, D u^n(t), \, \dot{ u}^n(t) ) \to (  u(t),\, D u(t), \, \dot{ u}(t) ) \quad \text{ in } H\times \underline{H} \times H
 \end{equation*}
for every $t \in [0,T]$. We can apply Proposition \ref{prop:equibdd-vn} with $F^n$ replaced by $F^n+\mathcal{T}^nw^n$ and we obtain that there exists a constant $C>0$ such that
\begin{equation*}
    \| u^n(t) \| + \| D u^n(t)\| + \| \dot{ u}^n(t) \| \leq C
\end{equation*}
for every $n\in \mathbb{N}$ and $t \in [0,T]$.
\end{proof}

We are now in a position prove Theorem \ref{thm:dip-cont-main} without additional assumptions on $T$.

\begin{proof}[Proof of Theorem \ref{thm:dip-cont-main}]
There exists $k\in \mathbb{N}$ such that $T_0:=T/k$ satisfies  $ B (T_0 + T^3_0) <1$. By Theorem \ref{thm:cont-Tpiccolo-NEW} we have
\begin{equation}\label{eq:convT0punt}
    (   u^n(t),\, D u^n(t), \, \dot{ u}^n(t) ) \to (  u(t),\, D u(t), \, \dot{ u}(t) ) \quad \text{ in } H\times \underline{H} \times H\, \text{ for all }t \in [0,T_0],
 \end{equation}
 \begin{equation}\label{eq:convT0lp}
    (u^n,\,Du^n,\,\dot{u}^n) \to (u,\,Du,\,\dot{u}) \quad \textnormal{in }L^2((0,T_0); H\times \underline{H} \times H).
\end{equation}
If $k=1$ the proof is finished, otherwise we consider the problem on the interval $[T_0,2T_0]$.

Note that $u^n(T_0)\in V^n$ and $\dot u^n(T_0)\in H$ are well defined, because $u\in C^0_w([0,T_0];V^n)$ and $\dot u\in C^0_w([0,T_0];H)$. Since $u^n(t)\in V^n_t$ for a.e. $t\in (0,T_0)$, it easy to see that $u^n(T_0)\in V^n_{T_0}$. In order to study the problem on $[T_0,\,2T_0]$ we define the spaces $\mathcal{V}_{T_0,2T_0}$, $\mathcal{V}^{D}_{T_0,2T_0}$, $\mathcal{V}^{\infty}_{T_0,2T_0}$,
$\mathcal{V}^n_{T_0,2T_0}$, $\mathcal{V}^{n,D}_{T_0,2T_0}$,
$\mathcal{V}^{n,\infty}_{T_0,2T_0}$, and $\mathcal{W}_{T_0.2T_0}$ as $\mathcal{V}$, $\mathcal{V}^{D}$, $\mathcal{V}^{\infty}$,
$\mathcal{V}^n$, $\mathcal{V}^{n,D}$,
$\mathcal{V}^{n,\infty}$, and $\mathcal{W}$
 with $0$ and $T$ replaced by $T_0$ and $2T_0$. For every $t \in [T_0,2T_0]$ we set
\begin{equation*}
    G(t):= F(t) + \int^{T_0}_0  \textnormal{e}^{\tau-t} \V Eu(\tau) \dtau \quad \text{and} \quad
    G^n(t):= F^n(t) + \int^{T_0}_0  \textnormal{e}^{\tau-t} \V^n Eu^n(\tau) \dtau.
\end{equation*}
Let $v$ be the solution of the problem
\begin{align*}
    & \,\,\,v\in\mathcal{V}_{T_0,2T_0} \quad \text{and} \quad v-u_D\in \mathcal{V}^D_{T_0,2T_0},\\
    &-\int^{2T_0}_{T_0}(\dot v(t),\dot \varphi(t))\,\dt+ \int^{2T_0}_{T_0} (\A Ev(t),E\varphi(t)) \,\dt - \int^{2T_0}_{T_0}\int^t_{T_0} \textnormal{e}^{\tau-t} (\V Ev(\tau), E\varphi(t))\,\dtau\dt\vspace{1cm}\nonumber \\
    &  = \int^{2T_0}_{T_0} (f(t),\varphi(t))\,\dt{} + \int^{2T_0}_{T_0} (G(t),E\varphi(t))  \,\dt{}\text{ for every } \varphi\in \mathcal{V}^D_{T_0,2T_0} \text{with } \varphi({T_0})=\varphi({2T_0})=0, \\
    & \,\,\,v(T_0)=u(T_0) \quad \text{in }  H \quad  \text{and} \quad \dot  v(T_0)=\dot u(T_0) \quad \text{in } (V^D_{T_0})^*.
\end{align*}

For every $n\in \mathbb{N}$ let $v^n$ be the solution of the problem
\begin{align*}
    & \,\,\,v^n\in\mathcal{V}^n_{T_0,2T_0} \quad \text{and} \quad v^n-u^n_D\in \mathcal{V}^{n,D}_{T_0,2T_0},\\
    &-\int^{2T_0}_{T_0}(\dot v^n(t),\dot \varphi(t))\,\dt+ \int^{2T_0}_{T_0} (\A^n Ev^n(t),E\varphi(t)) \,\dt - \int^{2T_0}_{T_0}\int^t_{T_0} \textnormal{e}^{\tau-t} (\V^n Ev^n(\tau), E\varphi(t))\,\dtau\dt\vspace{1cm}\nonumber \\
    &  = \int^{2T_0}_{T_0} (f^n(t),\varphi(t))\,\dt{} + \int^{2T_0}_{T_0} (G^n(t),E\varphi(t))  \,\dt{}\text{ for every } \varphi\in \mathcal{V}^{n,D}_{T_0,2T_0} \text{with } \varphi({T_0})=\varphi({2T_0})=0, \\
    & \,\,\,v^n(T_0)=u^n(T_0) \quad \text{in }  H \quad  \text{and} \quad \dot  v^n(T_0)=\dot u^n(T_0) \quad \text{in } (V^{n,D}_{T_0})^*.
\end{align*}

We note that, by the definition of $G$ and $G^n$, the restrictions of $u$ and $u^n$ to $[T_0,2T_0]$ satisfy the problems for $v$ and $v^n$. By uniqueness we have that $v=u$ and $v^n=u^n$ on $[T_0,2T_0]$.

For every $x \in \overline{\Omega}$ and $[T_0,2T_0]$ we define
$\Phi_{T_0}(t,x):= \Phi(t,\Psi(T_0,x))$, $\Psi_{T_0}(t,x):= \Psi(t,\Phi(T_0,x))$ $\Phi^n_{T_0}(t,x):= \Phi^n(t,\Psi^n(T_0,x))$
$\Psi^n_{T_0}(t,x):= \Psi^n(t,\Phi^n(T_0,x))$ which satisfy (H11)-(H15), \eqref{eq:conv-diff} with $0$ and $T$ replaced by $T_0$ and $2T_0$. For every $x \in \overline{\Omega}$ we define
$\Theta^n_{T_0}(x):=\Phi^n(T_0,\Theta^n(\Psi(T_0,x)))$, $\Xi^n_{T_0}(x):=\Phi(T_0,\Xi^n(\Psi^n(T_0,x)))$  and we observe that they satisfy (H19)-(H23) and \eqref{hpboooo} with $0$ and $T$ replaced by $T_0$ and $2T_0$.

By \eqref{eq:convT0punt} we have that $(u^n(T_0),Du^n(T_0),\dot{u}^n(T_0)) \to (u(T_0),Du(T_0),\dot{u}(T_0))$ in $H\times \underline{H} \times H$ while \eqref{eq:conv-Vn}, \eqref{eq:conf-forz}, and \eqref{eq:convT0lp} give $G^n \to G$ in $H^1(0,T;\tilde{H})$. We are now in a position to apply Theorem \ref{thm:cont-Tpiccolo-NEW} on $[T_0,2T_0]$ to obtain
\begin{equation*}
    (   u^n(t),\, D u^n(t), \, \dot{ u}^n(t) ) \to (  u(t),\, D u(t), \, \dot{ u}(t) ) \quad \text{ in } H\times \underline{H} \times H,
 \end{equation*}
 for all $t\in [T_0,2T_0]$. Moreover there exists a constant $C>0$ such that
\begin{equation*}
    \| u^n(t) \| + \| D u^n(t)\| + \| \dot{ u}^n(t) \| \leq C
\end{equation*}
for every $n\in \mathbb{N}$ and $t \in [T_0,2T_0]$.
The conclusion can be obtained by itarating this process a finite number of times. 
\end{proof}

\vspace{1 cm}

\noindent \textsc{Acknowledgements.}
This paper is based on work supported by the National Research Project (PRIN  2017) 
``Variational Methods for Stationary and Evolution Problems with Singularities and 
 Interfaces", funded by the Italian Ministry of University and Research. 
The authors are members of the {\em Gruppo Nazionale per l'Analisi Ma\-te\-ma\-ti\-ca, la Probabilit\`a e le loro Applicazioni} (GNAMPA) of the {\em Istituto Nazionale di Alta Matematica} (INdAM).

\vspace{1cm}

{\frenchspacing
\begin{thebibliography}{99}

\bibitem{Boltz_1}  L. Boltzmann: {\it Zur Theorie der elastischen Nachwirkung}, Sitzber. Kaiserl.
Akad. Wiss. Wien, Math.-Naturw. Kl. {\bf 70}, Sect. II (1874), 275-300.

\bibitem{Boltz_2} L. Boltzmann, {\it Zur Theorie der elastischen Nachwirkung}, Ann. Phys. u. Chem., {\bf 5} (1878), 430-432.

\bibitem{Caponi} M. Caponi: {\it Linear Hyperbolic Systems in Domains with Growing Cracks}, Milan J. Math. {\bf 85} (2017), 149-185. 

\bibitem{Caponi-tesi} M. Caponi: {\it On some mathematical problems in fracture dynamics}, Ph.D. Thesis SISSA, Trieste, 2019.

\bibitem{Dalmaso-Larsen} G. Dal Maso, C.J. Larsen: { \it Existence for wave equations on domains with arbitrary growing cracks}. Atti Accad. Naz. Lincei Rend. Lincei Mat. Appl. {\bf 22} (2011), no. 3, 387–408.

\bibitem{DalMaso-Luc}  G. Dal Maso, I. Lucardesi: {\it The wave equation on domains with cracks growing
on a prescribed path: existence, uniqueness, and continuous dependence on the data}, Appl. Math. Res. Express 2017 (2017), 184–241.

\bibitem{DalMaso-Toader} G. Dal Maso, R. Toader: {\it On the Cauchy problem for the wave equation on time-dependent domains}, J. Differential Equations {\bf 266} (2019), 3209-3246.

\bibitem{DL_V1} R. Dautray, J.-L. Lions: {\it Mathematical analysis and numerical methods for science and technology. Vol. 1. Physical origins and classical methods}. With the collaboration of Philippe Bénilan, Michel Cessenat, André Gervat, Alain Kavenoky and Hélène Lanchon. Translated from the French by Ian N. Sneddon. With a preface by Jean Teillac. Springer-Verlag, Berlin, 1990.

\bibitem{Dautray-Lions_V5} R. Dautray, J.-L. Lions: { \it Mathematical analysis and numerical methods for science and technology. Vol. 5. Evolution problems I}, With the collaboration of Michel Artola, Michel Cessenat and Hélène Lanchon. Translated from the French by Alan Craig. Springer-Verlag, Berlin, 1992.

\bibitem{D-L_V8} R. Dautray, J.-L. Lions: { \it Jacques-Louis Analyse mathématique et calcul numérique pour les sciences et les techniques. Vol. 8.} (French) [Mathematical analysis and computing for science and technology. Vol. 8] Évolution: semi-groupe, variationnel. [Evolution: semigroups, variational methods] Reprint of the 1985 edition. INSTN: Collection Enseignement. [INSTN: Teaching Collection] Masson, Paris, 1988.

\bibitem{Fab-Gi-Pata} M. Fabrizio, C. Giorgi, V. Pata: {\it A New Approach to Equations with Memory}, Arch. Rational Mech. Anal. 198 (2010),
189-232.

\bibitem{Fab-Morro} M. Fabrizio, A. Morro, {\it Mathematical problems in linear viscoelasticity}. SIAM Studies in Applied Mathematics, 12. Society for Industrial and Applied Mathematics (SIAM), Philadelphia, PA, 1992.

\bibitem{Ol-Sha-Yos} O.A. Oleinik, A.S. Shamaev, and G.A. Yosifian: {\it Mathematical problems in elasticity and homogenization}, Studies in
Mathematics and its Applications, {\bf26}. North-Holland Publishing Co., Amsterdam, 1992

\bibitem{Sapio} F. Sapio: {\it A dynamic model for viscoelasticity in domains with time dependent cracks}, preprint SISSA, Trieste, 2020.

\bibitem{Slepyan} L.I. Slepyan: { \it Models and phenomena in fracture mechanics}, Foundations of Engineering Mechanics. Springer-Verlag, Berlin, 2002.

\bibitem{Tasso} E. Tasso, {\it Weak formulation of elastodynamics in domains with growing cracks}, Ann. Mat. Pura Appl. (4) {\bf 199} (2020), 1571–1595.

\bibitem{Volterra_1} V. Volterra: {\it Sur les equations integro-differentielles et leurs applications},
Acta Mathem. {\bf 35} (1912), 295-356.

\bibitem{Volterra_2}  V. Volterra: {\it Le\c cons sur les fonctions de lignes}, Gauthier-Villars, Paris, 1913.

\end {thebibliography}
}

\end{document}